\documentclass[10pt, a4paper, twoside]{amsart}
\usepackage{amsthm}
\usepackage{amsmath}
\usepackage{amssymb}
\usepackage{mathtools}
\usepackage[all]{xy}
\usepackage{indentfirst}
\usepackage{comment}

\newtheorem{thm}{\bf Theorem}[section]
\newtheorem{prop}[thm]{\bf Proposition}
\newtheorem{lem}[thm]{\bf Lemma}

\newtheorem{cor}[thm]{\bf Corollary}

\newtheorem*{thm*}{\bf Theorem}
\newtheorem*{cor*}{\bf Corollary}

\theoremstyle{definition}
\newtheorem{df}[thm]{\bf Definition}
\newtheorem{rem}[thm]{\it Remark}

\newtheorem*{df*}{\bf Definition}
\newtheorem*{not*}{\bf Notation and Convention}
\newtheorem*{ack*}{\bf Acknowledgements}
\newtheorem*{dfrem*}{\bf Definition and Remark}

\newtheorem*{nota*}{\bf Notation}

\newtheorem*{remone*}{\it Remark}
\newtheorem*{remtwo*}{\it Remark}
\newtheorem*{remthree*}{\it Remark}
\newtheorem*{remfour*}{\it Remark}

\def\P{\mathbb{P}}
\def\C{\mathbb{C}}

\def\Q{\mathbb{Q}}

\def\R{\mathbb{R}}

\DeclareMathOperator{\can}{can}

\DeclareMathOperator{\Aut}{Aut}
\DeclareMathOperator{\Bir}{Bir}

\DeclareMathOperator{\Proj}{Proj}
\DeclareMathOperator{\Sing}{Sing}
\DeclareMathOperator{\val}{val}

\DeclareMathOperator{\Jac}{Jac}

\DeclareMathOperator{\Sec}{Sec}
\DeclareMathOperator{\Bs}{Bs}

\DeclareMathOperator{\Sym}{Sym}

\setcounter{section}{0}

\makeindex

\subjclass[2010]{Primary: 14E08, Secondary: 14B05, 14E05, 14J45}
\keywords{Birational rigidity}
\title[\tiny Birational rigidity of complete intersections]
{Birational rigidity of complete intersections}
\author{Fumiaki Suzuki}
\address{Graduate School of Mathematical Sciences, University of Tokyo,
Meguro-ku, Tokyo, 153-9814, Japan.}
\email{fsuzuki@ms.u-tokyo.ac.jp}

\begin{document}
\maketitle

\begin{abstract}
We prove that every smooth complete intersection $X=X_{d_{1}, \cdots, d_{s}}\subset \P^{\sum_{i=1}^{s}d_{i}}$
defined by $s$ hypersurfaces of degree $d_{1}, \cdots, d_{s}$ is birationally superrigid 
if $5s +1\leq \frac{2(\sum_{i=1}^{s}d_{i}+1)}{\sqrt{\prod_{i=1}^{s}d_{i}}}$.
In particular, $X$ is non-rational and $\Bir(X)=\Aut(X)$.
We also prove birational superrigidity of singular complete intersections with similar numerical condition.
These extend the results proved by Tommaso de Fernex.
\end{abstract}

\section*{Introduction}
Throughout this paper, we work over the field of complex numbers $\C$.
A complete intersection of type $X_{d_{1},\cdots, d_{s}}\subset \P^{N}$, which is defined by $s$ hypersurfaces of degree $d_{1}, \cdots, d_{s}$ in a projective space $\P^{N}$,
is $\Q$-Fano, i.e. normal, $\Q$-factorial, terminal and having an ample anti-canonical divisor,
if $\sum_{i=1}^{s}d_{i}\leq N$ and it has only mild singularities.
Then it is rationally connected by the results of K\'ollar-Miyaoka-Mori \cite{KMM}, Zhang \cite{Z} and Hacon-Mckernan \cite{HM}.
A natural problem is to determine its rationality.
If its dimension is at most $2$ or if its degree is so, then it is rational.
How about the remaining cases?
In these cases, its Picard number is $1$ by the Lefschetz hyperplane section theorem.
We mean by a $\Q$-Fano variety that of Picard number $1$ in what follows.

Given a $\Q$-Fano variety, one of the most effective ways to prove its {\it non}-rationality is proving its birational superrigidity.
We recall that a $\Q$-Fano variety $X$ is called {\it birationally superrigid} if any birational map to the source of another Mori fiber space is isomorphism.
It implies that $X$ is non-rational and $\Bir(X) = \Aut(X)$.

Return to the initial problem and ask the following question:  in the remaining cases, which $\Q$-Fano complete intersections are birationally superrigid?
By general linear projections, those of index $\geq 2$ always have non-trivial birational Mori fiber space structures.
So we only consider the index $1$ case; let $N=\sum_{i=1}^{s}d_{i}$ in what follows.

First, let $s=1$.
Iskovskih and Manin proved that every smooth quartic $3$-fold $X_{4}\subset \P^{4}$ is non-rational by proving that any birational ones are isomorphic to each other in the paper \cite{IM},
where the notion of birational superrigidity has its origin.
This gave the negative answer to L\"uroth problem
together with the paper \cite{CG} by Clemens and Griffiths.
Then, after the works of Iskovskih-Manin, Pukhlikov, Chel'tsov and de Fernex-Ein-Musta\c t\v a\ \cite{Ch1, dFEM2, IM, Puk, Puk6},
de Fernex proved in \cite{dF1} (see also \cite{dF2} for an {\it erratum} with an amended proof to accompany \cite{dF1}) that every smooth hypersurface $X_{N}\subset \P^{N}$ is birationally superrigid for $N\geq 4$.
This completes the list of birationally superrigid smooth hypersurfaces.
He also proved birational superrigidity of a large class of singular Fano hypersurfaces of index $1$ in \cite{dF3}
(see \cite{Ch4, CM, Me, Puk1, Puk8, Puk7, Sh} for related results on singular hypersurfaces).

In this paper, we extend the results of de Fernex in \cite{dF1, dF3} for $s\geq 2$.
Before stating a main theorem, we briefly summerize known results.
For $s\geq 2$, birational superrigidity and birational rigidity (see \cite[Definition 1.3]{C} for the definition; a slightly weaker notion than birational superrigidity, sufficient for non-rationality though) are known only when a complete intersection is smooth and satisfies general conditions.
The following is the list, where the first and second ones are about birational superrigidity and the third one is about birational rigidity:
\begin{itemize}
\item smooth complete intersections $X_{d_{1},\cdots, d_{s}}\subset \P^{\sum_{i=1}^{s}d_{i}}$ of dimension $\geq 12$ which satisfy so-called regularity conditions (see \cite{Puk9} for the definition),
except  three infinite series $X_{2,\cdots,2}, X_{2,\cdots, 2,3}$ and $X_{2,\cdots, 2,4}$, by Pukhlikov (see \cite{Puk5, Puk3, Puk4}),
\item smooth complete intersections $X_{2,4}\subset \P^{6}$ not containing planes by Chel'tsov (see \cite{Ch2}),
\item general smooth complete intersections $X_{2,3}\subset \P^{5}$ by Iskovskih-Pukhlikov (see \cite{IP}, and see \cite[Chapter 3, Remark 1]{IP} for what we exactly mean by ``general'' here).
\end{itemize}
Note that no explicit examples which satisfy these conditions have been obtained so far.
In addition, in the following cases, non-rationality is proved by Beauville, using intermediate jacobians:
\begin{itemize}
\item every smooth complete intersection $X_{2,2,2}\subset \P^{8}$ (see \cite{B1}),
\item general smooth complete intersections $X_{2,3}\subset \P^{5}$ (see \cite{B1}),
\item the smooth complete intersection defined by $\sum_{i=0}^{6}X_{i}=\sum_{i=0}^{6}X_{i}^{2}=\sum_{i=0}^{6}X_{i}^{3}$
$=0$ in $\P^{6}$ (see \cite{B2}).
\end{itemize}
No rational members are known.

To state a main theorem, we recall the following definition of singularities, which is a modification of that introduced in \cite{dF3}.

\begin{df}
Let $p\in X$ be a germ of a variety.
For any triple of integers $(\delta, \nu, k)$ with $\delta \geq -1, \nu \geq 1$ and $k\geq 0$,
we say that $p$ is a {\it singularity of type $(\delta, \nu, k)$}
if the singular locus of $X$ has dimension at most $\delta$,
and given a general complete intersection $Y\subset X$ of codimension $\min \left\{\delta + k, \dim X \right\}$ through $p$,
the $(\nu -1)$-th power of the maximal ideal $\mathfrak{m}_{Y, p}\subset \mathcal{O}_{Y,p}$ is contained in the integral closure of the Jacobian ideal $\Jac_{Y}$ of $Y$.
We use the convention that $p$ is a singularity of type $(-1, 1, k)$ for any $k\geq 0$ if $p$ is a smooth point.
\end{df}

For $s$ positive integers $d_{1},\cdots,d_{s}$, set $c_{s}(d_{1},\cdots,d_{s})=\frac{2(\sum_{i=1}^{s}d_{i}+1)}{\sqrt{\prod_{i=1}^{s}d_{i}}} - 5s$ in what follows.
Our main theorem is the following.

\begin{thm}\label{mainthm}
Let $d_{1}, \cdots, d_{s}\geq 2, \delta \geq -1$ and $\nu \geq 1$ be integers which satisfy
\[2\delta + \nu +2 \leq c_{s}(d_{1},\cdots,d_{s}).\]
Then every complete intersection $X=X_{d_{1},\cdots, d_{s}}\subset \P^{\sum_{i=1}^{s}d_{i}}$
with only singularities of type $(\delta, \nu, 2s)$
is $\Q$-Fano and birationally superrigid. 
In particular, $X$ is non-rational and $\Bir(X) = \Aut(X)$.
\end{thm}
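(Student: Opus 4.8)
The plan is to run the Noether--Fano method adapted to the complete intersection setting, following de Fernex's strategy for hypersurfaces but keeping track of the product $\prod d_i$ rather than a single degree. Suppose $X$ is not birationally superrigid; then there is a movable linear system $\mathcal{M}\subset |-nK_X|$ whose log pair $(X,\frac{1}{n}\mathcal{M})$ is not canonical, so there is a subvariety $Z\subset X$ (the center of a divisorial valuation with discrepancy too small) at which $\mathcal{M}$ is ``too singular''. First I would carry out the standard reductions: using the $\Q$-Fano property (which follows from Theorem~\ref{mainthm}'s singularity hypothesis together with the numerical bound, as in the $\Q$-factoriality and terminality discussion in the introduction), the Picard number one hypothesis forces any counterexample to come from such an $\mathcal{M}$ with a non-canonical center $Z$, and one shows $\dim Z \le \dim X - 2$. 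The case analysis then splits according to whether $Z$ is a point or has positive dimension.

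The positive-dimensional case is handled by intersecting with a general hyperplane section (or a general complete intersection subvariety through a general point of $Z$) to reduce the dimension of the center, eventually landing on the zero-dimensional case; here the singularity type $(\delta,\nu,2s)$ is exactly what guarantees that the general complete intersection slices retain controlled singularities (the Jacobian ideal condition in the Definition), and the ``$2s$'' is chosen so that after the reductions needed for $s$ equations one still has enough slices to spare. For the point case $Z=\{p\}$, the heart of the argument is a local intersection-multiplicity estimate: one bounds the multiplicity $\mathrm{mult}_p \mathcal{M}$, or more precisely a self-intersection quantity like $\mathrm{mult}_p(M_1\cdot M_2)$ for general members, from below using the failure of canonicity (via the multiplicity form of the discrepancy inequality / a log canonical threshold bound, e.g.\ $\lct_p(X,\mathcal{M})<\frac{1}{n}$), and from above using that $M_i\in|-nK_X| = |nH|$ restricted to $X$, so global intersection numbers are controlled by $H^{\dim X-2}\cdot\mathcal{M}^2 = n^2 \deg X = n^2\prod d_i$. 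The clash between the lower bound (which grows because the singularity is forced to be worse than the numerical budget allows) and the upper bound $n^2\prod d_i$ is where the inequality $5s+1\le \frac{2(\sum d_i+1)}{\sqrt{\prod d_i}}$, equivalently $2\delta+\nu+2\le c_s(d_1,\dots,d_s)$, gets used; the $\sqrt{\prod d_i}$ appears because the relevant bound on $\mathrm{mult}_p$ of a hypersurface-type section is roughly $\deg/(\text{something linear in }\deg)$, and taking the geometric mean over the $s$ defining equations of comparable ``effective degree'' produces the square root.

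Concretely, the key inputs I would invoke are: (i) the Noether--Fano inequality producing a non-canonical (indeed non-log-canonical after perturbation) center; (ii) de Fernex--Ein--Musta\c t\v a-type multiplicity bounds for the log canonical threshold at a point of a local complete intersection, which is where the Definition's condition ``$\mathfrak{m}_{Y,p}^{\nu-1}\subset \overline{\Jac_Y}$'' enters — it converts the abstract singularity type into a numerical bound on how singular $\mathcal{M}$ can be forced to be before contradicting $\lct<1/n$; and (iii) the ``cone''/secant-variety degree estimates bounding $\mathrm{mult}_p X$ and $\mathrm{mult}_p$ of linear sections in terms of $d_1,\dots,d_s$. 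I expect the main obstacle to be the point case estimate for $s\ge 2$: one must find the right way to distribute the ``defect'' in canonicity among the $s$ hypersurface equations so that the resulting bound is governed by $\sqrt{\prod d_i}$ rather than by $\max d_i$ or $\sum d_i$ — this is precisely the improvement over a naive induction on $s$, and getting the constant $5s$ (rather than something worse, like $s\cdot(\text{const})$ with a larger constant) requires carefully choosing the general complete intersection $Y\subset X$ in the Definition and tracking its Jacobian ideal through each slicing step. The singular case then follows the same outline with the discrepancy bookkeeping adjusted by $\delta$ and $\nu$ exactly as encoded in $2\delta+\nu+2\le c_s$.
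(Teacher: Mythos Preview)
Your overall framework is right --- Noether--Fano, a self-intersection cycle $Z=D_1\cdot D_2$ of two general members, cutting down by hyperplanes through a point of a non-canonical center, and then invoking the de~Fernex--Ein--Musta\c{t}\v{a} machinery --- and this is indeed what the paper does. But there is a genuine gap at the point you label (iii): you gesture at ``cone/secant-variety degree estimates'' and later at ``distributing the defect among the $s$ hypersurface equations'', whereas the actual engine is the paper's Proposition~\ref{prop}, a generalization of Pukhlikov's multiplicity bound to complete intersections. It says that if $\alpha$ is an effective cycle on $X$ of pure codimension $k$ with $\alpha\sim m\cdot c_1(\mathcal{O}_X(1))^k\cap[X]$, then $e_S(\alpha)\le m$ for every subvariety $S$ of dimension $\ge ks$ off the singular locus. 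The point is the threshold $ks$, not $k$: for a codimension-$s$ complete intersection the cone/residual-intersection argument needs $s$ steps per codimension, and this is proved via a multiple residual intersection construction in $\P^{N+1}$. Applied with $k=1$ to a member $D\in\mathcal{L}$ it gives $\dim\{e_x(D)>\mu\}\le \delta+s$, hence every non-terminal center of $(X,cZ)$ has dimension $\le \delta+s$; applied with $k=2$ to $Z=D_1\cdot D_2$ it gives $\dim\{e_x(Z)>\mu^2\}\le \delta+2s$. These two bounds are what dictate how many hyperplane sections you must take through the chosen point $P$ (namely $\delta+s$ to isolate the center, then $s$ more so that on the resulting $Y$ the high-multiplicity locus of $B=Z|_Y$ is zero-dimensional), and the bookkeeping $K_Y\sim(\delta+2s-1)H$ together with the de~Fernex bounds on $\lambda=\val_E(P)/(c\,\val_E(B))$ is exactly what produces the $5s$ in $c_s$. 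Without Proposition~\ref{prop} you have no a~priori bound on the dimension of the non-terminal center, and the cutting-down step does not terminate in a controlled number of slices.

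Your explanation of the $\sqrt{\prod d_i}$ is also off. It does not come from any ``geometric mean over the $s$ equations''; it comes straight from the endgame on $Y$: one has $\deg B = \mu^2\prod d_i$, and the de~Fernex--Ein--Musta\c{t}\v{a} inequality (or its Mather-discrepancy variant \cite[Theorem~2.5]{dFM} in the singular case) compares $\val_E(B)$ to $\sqrt{e_P(B)}$, which is what puts $\sqrt{\prod d_i}$ in the denominator. The Jacobian condition $\mathfrak{m}_{Y,P}^{\nu-1}\subset\overline{\Jac_Y}$ enters only at this last step, contributing the $\nu$ to the final inequality, and the ``$2s$'' in the singularity type $(\delta,\nu,2s)$ is there precisely so that after the $\delta+2s$ hyperplane cuts the condition still holds on $Y$. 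So the missing idea is not a sharper distribution argument but rather the residual-intersection multiplicity bound of Proposition~\ref{prop}; once you have it, the rest is the de~Fernex argument verbatim with $N$ replaced by $\sum d_i$ and the slice count shifted by the extra factors of $s$.
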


We give a few corollaries, to see which complete intersetions are covered by Theorem \ref{mainthm}. 
First we consider the smooth case.
Then, if we fix $d_{1},\cdots, d_{s-1}$, the inequality in Theorem \ref{mainthm} is satisfied for sufficiently large $d_{s}$.
The following are the simplest examples.

\begin{cor}
Every smooth complete intersection
\[X=X_{2, d}\subset \P^{d+2}, X_{3,d}\subset \P^{d+3}, X_{4,d}\subset \P^{d+4}, X_{2,2,d}\subset \P^{d+4}\]
is birationally superrigid for $d\geq 55, 83, 111, 246$ respectively.
\end{cor}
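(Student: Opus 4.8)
The plan is to deduce this directly from Theorem \ref{mainthm} by choosing the smallest admissible invariants. Since $X$ is smooth, every point $p \in X$ is a smooth point, and by the convention in the definition of singularity type recalled above such a $p$ is a singularity of type $(-1, 1, k)$ for every $k \geq 0$; in particular $X$ has only singularities of type $(-1, 1, 2s)$. With $\delta = -1$ and $\nu = 1$ the left-hand side of the inequality in Theorem \ref{mainthm} is $2(-1) + 1 + 2 = 1$, so the statement reduces to checking the single numerical inequality $c_{s}(d_{1}, \dots, d_{s}) \geq 1$ for each of the four families, under the stated lower bounds on $d$.

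Each verification is an elementary one-variable estimate. For $X = X_{2, d} \subset \P^{d+2}$ we have $s = 2$ and $c_{2}(2, d) = \frac{2(d + 3)}{\sqrt{2d}} - 10$; since both sides are positive for $d \geq 1$, the inequality $c_{2}(2, d) \geq 1$ is equivalent, after clearing the square root and squaring, to $4(d + 3)^{2} \geq 242\, d$, i.e.\ to $2 d^{2} - 109\, d + 18 \geq 0$, whose larger root is less than $55$ (the quadratic is negative at $d = 54$ and positive at $d = 55$); hence it holds exactly for $d \geq 55$. The same manipulation gives $c_{2}(3, d) \geq 1 \iff 4 d^{2} - 331\, d + 64 \geq 0$, valid for $d \geq 83$; $c_{2}(4, d) \geq 1 \iff d^{2} - 111\, d + 25 \geq 0$ (using $\sqrt{4d} = 2\sqrt{d}$), valid for $d \geq 111$; and, for $X_{2, 2, d} \subset \P^{d + 4}$ with $s = 3$ and $c_{3}(2, 2, d) = \frac{d + 5}{\sqrt{d}} - 15$, $c_{3}(2, 2, d) \geq 1 \iff d^{2} - 246\, d + 25 \geq 0$, valid for $d \geq 246$. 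In each case, evaluating the relevant quadratic at the integer one below the claimed bound shows the bound cannot be improved by this argument.

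Once the numerical hypothesis is in place, Theorem \ref{mainthm} immediately yields that $X$ is $\Q$-Fano and birationally superrigid, hence non-rational with $\Bir(X) = \Aut(X)$, in all four cases. There is no genuine obstacle to overcome here: the only point requiring a little care is the bookkeeping for the singularity convention — smoothness is exactly what lets us take $\delta = -1$ and $\nu = 1$, making the inequality of Theorem \ref{mainthm} as weak as possible — together with the routine arithmetic of the four quadratic inequalities.
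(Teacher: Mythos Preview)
Your proof is correct and follows exactly the approach the paper intends: the corollary is stated without proof, as the paper simply notes that for smooth $X$ one may take $\delta=-1$, $\nu=1$, reducing Theorem~\ref{mainthm} to the inequality $1\le c_{s}(d_{1},\dots,d_{s})$, and leaves the four numerical verifications to the reader. Your arithmetic checks out in each case, and your remark that the stated bounds are sharp for this method (the relevant quadratic is negative at $d-1$ in every instance) is a nice addition.
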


In next three corollaries, we consider the isolated hypersurface singularity case, i.e. $\delta = 0$ and $\dim \mathfrak{m}_{p}/\mathfrak{m}_{p}^{2} = \dim X +1$ for every $p \in \Sing(X)$.

Recall that an isolated hypersurface singularity is called {\it semi-homogeneous} if its tangent cone is smooth away from the vertex.
If we use \cite[Proposition 2.4]{dF3}, Theorem \ref{mainthm} implies the following.

\begin{cor}\label{cor2}
Let $d_{1}, \cdots, d_{s}\geq 2$ be positive integers and $X= X_{d_{1},\cdots, d_{s}}\subset \P^{\sum_{i=1}^{s}d_{i}}$ be a singular complete intersection
with isolated semi-homogeneous hypersurface singularities.
If
\[
e_{p}(X)\leq c_{s}(d_{1},\cdots, d_{s}) -2
\]
for every $p\in \Sing(X)$,
then $X$ is $\Q$-Fano and birationally superrigid.
\end{cor}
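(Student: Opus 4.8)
The plan is to deduce this from Theorem~\ref{mainthm}: I will produce a $\nu$ for which $X$ has only singularities of type $(0,\nu,2s)$ and for which $(\delta,\nu)=(0,\nu)$ satisfies the numerical condition. Since the singularities of $X$ are isolated, $\Sing(X)$ is a finite set; put $\nu=\max\{e_p(X):p\in\Sing(X)\}$, an integer $\geq 2$ (a singular point of a hypersurface germ has multiplicity $\geq 2$). Then $\dim\Sing(X)=0=\delta$, and the inequality $2\delta+\nu+2\leq c_s(d_1,\dots,d_s)$ is exactly $\nu\leq c_s(d_1,\dots,d_s)-2$, i.e.\ the hypothesis $e_p(X)\leq c_s(d_1,\dots,d_s)-2$ read over all $p\in\Sing(X)$. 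So the corollary reduces to verifying, at each $p\in\Sing(X)$, that a general complete intersection $Y\subset X$ through $p$ of codimension $\min\{2s,\dim X\}$ satisfies $\mathfrak m_{Y,p}^{\nu-1}\subset\overline{\Jac_Y}$; smooth points are of type $(-1,1,2s)$, hence of type $(0,\nu,2s)$, by convention.

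Fix $p\in\Sing(X)$. I would first record that the hypothesis makes $\dim X$ large: from $e_p(X)\geq 2$ we get $c_s(d_1,\dots,d_s)\geq 4$, which together with $\prod_i d_i\geq 2^s$ forces $\sum_i d_i\geq 3s$, hence $\dim X=\sum_i d_i-s\geq 2s$ and $\dim\mathfrak m_{X,p}/\mathfrak m_{X,p}^2=\dim X+1>2s$. Thus the codimension of $Y$ is $2s$, and cutting by $2s$ general members of $\mathfrak m_{X,p}$ drops the dimension by exactly $2s$. In a formal neighbourhood of $p$ write $X=\{g=0\}\subset\A^{\dim X+1}$ with $g=g_e+g_{e+1}+\cdots$, $e=e_p(X)$, and $\{g_e=0\}\subset\P^{\dim X}$ smooth; after a linear change of coordinates, passing to $Y$ amounts to restricting $g$ to a general codimension-$2s$ linear subspace $L$ through the origin. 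For general $L$ the leading form of $g|_L$ is $g_e|_L\neq 0$, still of degree $e$, and $\{g_e|_L=0\}$ is a general linear section of the smooth variety $\{g_e=0\}$, hence again smooth away from the vertex by Bertini. So $p\in Y$ is once more an isolated semi-homogeneous hypersurface singularity, with $e_p(Y)=e_p(X)\leq\nu$.

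Now \cite[Proposition~2.4]{dF3} applied to $Y$ yields $\mathfrak m_{Y,p}^{e_p(Y)-1}\subset\overline{\Jac_Y}$, whence $\mathfrak m_{Y,p}^{\nu-1}\subset\mathfrak m_{Y,p}^{e_p(Y)-1}\subset\overline{\Jac_Y}$. Therefore $X$ has only singularities of type $(0,\nu,2s)$ with $2\delta+\nu+2=\nu+2\leq c_s(d_1,\dots,d_s)$, and Theorem~\ref{mainthm} gives that $X$ is $\Q$-Fano and birationally superrigid, in particular non-rational with $\Bir(X)=\Aut(X)$. The one point needing care is the reduction to the section $Y$: one must know that a general linear section of the projectivized tangent cone remains smooth (Bertini) and that the multiplicity is unchanged, which is precisely where one uses that $\dim X$ is large enough for the $2s$ generic cuts to be transverse; the rest is bookkeeping with the numerical inequality.
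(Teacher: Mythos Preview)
Your proof is correct and follows exactly the route the paper indicates: one sets $\delta=0$, $\nu=\max_{p\in\Sing(X)}e_p(X)$, checks that this makes the numerical inequality of Theorem~\ref{mainthm} equivalent to the hypothesis, and then uses \cite[Proposition~2.4]{dF3} to verify that each singular point is of type $(0,\nu,2s)$. You have simply written out in detail the verification (preservation of semi-homogeneity and multiplicity under a general linear section, together with the check that $\dim X\ge 2s$) that the paper leaves implicit in its one-line appeal to \cite[Proposition~2.4]{dF3}.
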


Recall that for an isolated hypersurface singularity $p\in X$, its Tyurina number is defined by $\tau_{p}(X)= \dim \mathcal{O}_{X,p}/\Jac_{X}$.
For $m\geq 1$, set $\tau_{p}^{(m)}(X)$ to be the Tyurina number of a general complete intersection of $X$ of codimension $m$ through $p$.
By the same argument as in the proof of \cite[Corollary 1.5]{dF3}, Theorem \ref{mainthm} implies the following.

\begin{cor}\label{cor3}
Let $d_{1}, \cdots, d_{s}\geq2$ be positive integers and $X=X_{d_{1},\cdots,d_{s}}\subset \P^{\sum_{i=1}^{s}d_{i}}$ be a singular complete intersection with isolated hypersurface singularities.
If
\[
\min \left\{ \tau_{p}(X),\tau_{p}^{(1)}(X)\cdots, \tau^{(2s)}_{p}(X)\right\}\leq c_{s}(d_{1},\cdots, d_{s})-3
\]
for every $p\in \Sing(X)$,
then $X$ is $\Q$-Fano and birationally superrigid.
\end{cor}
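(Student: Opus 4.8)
The plan is to reduce to Theorem~\ref{mainthm} applied with $\delta=0$ and $k=2s$. I will show that under the hypothesis every $p\in\Sing(X)$ is a singularity of type $(0,\nu_p,2s)$ for
\[
\nu_p:=1+\min\bigl\{\tau_p(X),\tau_p^{(1)}(X),\dots,\tau_p^{(2s)}(X)\bigr\},
\]
so that the inequality required by Theorem~\ref{mainthm}, namely $2\cdot 0+\nu_p+2\le c_s(d_1,\dots,d_s)$, is exactly the assumption; Theorem~\ref{mainthm} then gives that $X$ is $\Q$-Fano and birationally superrigid. Two preliminary points: $\Sing(X)$ has dimension $\le 0$ since the singularities are isolated, and the codimension $\min\{\delta+k,\dim X\}$ appearing in the definition of type equals $2s$ because $\dim X=\bigl(\sum_i d_i\bigr)-s\ge 2s$. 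The latter is forced numerically: each section of $X$ cut out by general hyperplanes through $p$ is again singular at $p$ with an isolated singularity, so its Jacobian ideal is $\mathfrak m$-primary and proper and all Tyurina numbers in the minimum are $\ge 1$; hence $c_s(d_1,\dots,d_s)\ge 4$, and with $d_i\ge 2$ this makes $\sum_i d_i$ large enough that $\dim X\ge 2s$.

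Fix $p\in\Sing(X)$ and choose $m_0\in\{0,\dots,2s\}$ attaining the minimum, so $\tau:=\tau_p^{(m_0)}(X)=\nu_p-1$. Cutting $X$ by general hyperplanes of $\P^{\sum_i d_i}$ through $p$ produces a chain $X=Y^{(0)}\supseteq Y^{(1)}\supseteq\dots\supseteq Y^{(2s)}=:Y$, with $Y^{(j)}$ of codimension $j$ in $X$; by genericity each $Y^{(j)}$ has an isolated hypersurface singularity at $p$ of embedding dimension $\dim Y^{(j)}+1$, and locally analytically $\mathcal{O}_{Y^{(j)},p}$ is the quotient of a power series ring by the restriction of a local equation of $X$. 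By definition $\dim_{\C}\mathcal{O}_{Y^{(m_0)},p}/\Jac_{Y^{(m_0)}}=\tau$, and the elementary fact that in the chain $R\supseteq\mathfrak m+I\supseteq\mathfrak m^2+I\supseteq\dots$ every strict inclusion drops $\C$-dimension while, by Nakayama, the chain reaches $I$ precisely when some $\mathfrak m^k\subseteq I$, gives $\mathfrak m_{Y^{(m_0)},p}^{\,\tau}\subseteq\Jac_{Y^{(m_0)}}$. Restricting to $Y$,
\[
\mathfrak m_{Y,p}^{\,\tau}=\mathfrak m_{Y^{(m_0)},p}^{\,\tau}\cdot\mathcal{O}_{Y,p}\subseteq\Jac_{Y^{(m_0)}}\cdot\mathcal{O}_{Y,p},
\]
so it remains to prove $\Jac_{Y^{(m_0)}}\cdot\mathcal{O}_{Y,p}\subseteq\overline{\Jac_{Y}}$, which then yields $\mathfrak m_{Y,p}^{\,\nu_p-1}\subseteq\overline{\Jac_Y}$ and hence the desired type $(0,\nu_p,2s)$.

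For this last containment I would work one general hyperplane section at a time, using $\overline{I}\cdot\mathcal{O}_{Y,p}\subseteq\overline{I\cdot\mathcal{O}_{Y,p}}$ and $\overline{\overline{J}}=\overline{J}$ to telescope down the chain: it suffices to show that for a general hyperplane section $Y''=Y'\cap H$ of an isolated hypersurface singularity $Y'$, with $p\in H$, one has $\Jac_{Y'}\cdot\mathcal{O}_{Y'',p}\subseteq\overline{\Jac_{Y''}}$. Writing $Y'=\{g=0\}\subset\A^n$ locally and parametrizing $H$ by the remaining coordinates via $z_n=\sum_{i<n}b_iz_i$ with $(b_i)$ general, so $\bar g=g(z_1,\dots,z_{n-1},\sum_{i<n}b_iz_i)$, the chain rule gives $\partial_i\bar g=(\partial_i g)|_{Y''}+b_i\,(\partial_n g)|_{Y''}$ for $i<n$. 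Hence $\Jac_{Y''}$ is generated by $n-1$ general $\C$-linear combinations of the $n$ elements $(\partial_1 g)|_{Y''},\dots,(\partial_n g)|_{Y''}$, which generate the ideal $\Jac_{Y'}\cdot\mathcal{O}_{Y'',p}$, and this ideal is $\mathfrak m_{Y'',p}$-primary because $g$ has an isolated singularity. Since the analytic spread of an $\mathfrak m$-primary ideal in $\mathcal{O}_{Y'',p}$ (a local ring of dimension $\dim Y''=n-2$) equals $n-2$, general $\C$-linear combinations of a generating set form a minimal reduction; in particular $\Jac_{Y''}$ contains a minimal reduction of $\Jac_{Y'}\cdot\mathcal{O}_{Y'',p}$, so $\Jac_{Y'}\cdot\mathcal{O}_{Y'',p}\subseteq\overline{\Jac_{Y''}}$, as needed.

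Granting this, the argument closes, and the rest is the same bookkeeping as in the proof of \cite[Corollary~1.5]{dF3}. The main obstacle is exactly the last step: controlling how the Jacobian ideal of an isolated hypersurface singularity changes, up to integral closure, under a general hyperplane section — i.e. making rigorous, via the theory of minimal reductions and analytic spread, that the special-but-generic linear combinations of the restricted partials appearing in $\Jac_{Y''}$ still capture $\Jac_{Y'}$ modulo integral closure. The filtration inequality $\mathfrak m^{\tau}\subseteq\Jac$, the numerical check $\dim X\ge 2s$, and the persistence of isolated hypersurface singularities under general sections are routine.
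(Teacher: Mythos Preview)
Your reduction to Theorem~\ref{mainthm} with $\delta=0$, $k=2s$, and $\nu=1+\min_m\tau_p^{(m)}(X)$ is exactly the argument the paper invokes via \cite[Corollary~1.5]{dF3}, and the two ingredients you isolate---the filtration bound $\mathfrak m^{\tau}\subseteq\Jac$ from $\dim_{\C}\mathcal O/\Jac=\tau$, and the descent $\Jac_{Y'}\cdot\mathcal O_{Y''}\subseteq\overline{\Jac_{Y''}}$ along general hyperplane sections---are the right ones. Your stated concern about the last step evaporates if you run the reduction argument in the \emph{fixed} ring $\mathcal O_{Y',p}$ rather than in the $b$-dependent $\mathcal O_{Y'',p}$: since $\Jac_{Y'}$ is $\mathfrak m$-primary in the $(n-1)$-dimensional local ring $\mathcal O_{Y',p}$, its fiber cone has $\Proj\mathcal F(\Jac_{Y'})\subset\P^{n-1}$ of dimension $n-2$, and the $n-1$ linear forms $T_i+b_iT_n$ vanish simultaneously only at the single point $[-b_1:\cdots:-b_{n-1}:1]$, which misses this locus for general $b$; hence $(\partial_ig+b_i\,\partial_ng)_{i<n}$ is already a reduction of $\Jac_{Y'}$ in $\mathcal O_{Y',p}$, and restricting the reduction equation $\Jac_{Y'}^{r+1}=J\cdot\Jac_{Y'}^{r}$ to $\mathcal O_{Y'',p}$ gives the desired inclusion.
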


For $s$ positive integers $d_{1},\cdots,d_{s}$, set $c'_{s}(d_{1},\cdots, d_{s})= \left(\prod_{i=1}^{s}d_{i}\right)\cdot (\sum_{i_{1}+\cdots+i_{s}=\dim X}$\\$(d_{1}-1)^{i_{1}}\cdots (d_{s}-1)^{i_{s}})$.
Let $X=X_{d_{1},\cdots, d_{s}}\subset \P^{N}$.
We denote by $X^{\vee} \subset (\P^{N})^{\vee}$ the dual variety of $X$.
It is known that $X^{\vee}$ is a hypersurface of degree $c'_{s}(d_{1},\cdots,d_{s})$ if $X$ is smooth and non-linear.
By the same argument as in the proof of \cite[Corollary 1.6]{dF3}, using the generalized Teissier-Pl\"ucker formula in \cite[Theorem 1]{Kle}, Theorem \ref{mainthm} implies the following.

\begin{cor}\label{cor4}
Let $d_{1}, \cdots, d_{s}\geq2$ be positive integers and $X=X_{d_{1},\cdots,d_{s}}\subset \P^{\sum_{i=1}^{s}d_{i}}$ be a singular complete intersection with $t$ isolated hypersurface singularities.
If $X^{\vee}$ is a hypersurface of degree
\[
\deg X^{\vee} \geq c'_{s}(d_{1},\cdots,d_{s}) -( 2 c_{s}(d_{1},\cdots,d_{s}) + 2t -8),
\]
then $X$ is $\Q$-Fano and birationally superrigid.
\end{cor}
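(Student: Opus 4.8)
The plan is to turn the lower bound on $\deg X^{\vee}$ into an upper bound on the Tyurina numbers of $X$ at its singular points and then invoke Corollary \ref{cor3}. Since all $d_{i}\geq 2$, the complete intersection $X$ is non-linear; by hypothesis its singular locus $\Sing X=\{p_{1},\dots,p_{t}\}$ consists of isolated hypersurface singularities and $X^{\vee}\subset(\P^{N})^{\vee}$ is a hypersurface, where $N=\sum_{i=1}^{s}d_{i}$. Put $n=\dim X=N-s$. The generalized Teissier--Pl\"ucker formula of Kleiman, \cite[Theorem 1]{Kle}, expresses $\deg X^{\vee}$ as the value $c'_{s}(d_{1},\dots,d_{s})$ that it takes when $X$ is smooth, corrected by a sum of local polar invariants:
\[
\deg X^{\vee}=c'_{s}(d_{1},\dots,d_{s})-\sum_{j=1}^{t}\varepsilon_{p_{j}},\qquad
\varepsilon_{p}=\mu^{(n)}_{p}(X)+\mu^{(n-1)}_{p}(X),
\]
where $\mu^{(n)}_{p}(X)$ is the Milnor number of $X$ at $p$ and $\mu^{(n-1)}_{p}(X)$ that of a general hyperplane section of $X$ through $p$. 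I would then record two elementary facts: first, $\varepsilon_{p}\geq 2$, because a general hyperplane through a singular point of $X$ meets $X$ in a point of multiplicity $\operatorname{mult}_{p}(X)\geq 2$, which is again an isolated hypersurface singularity, so both Milnor numbers are $\geq 1$; second, since a general hyperplane section of an isolated hypersurface singularity is again one, the inequality $\tau\leq\mu$ for isolated hypersurface singularities applied to $X$ and to such a section gives $\varepsilon_{p}\geq\tau_{p}(X)+\tau^{(1)}_{p}(X)$.

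With these in hand the argument is arithmetic. The hypothesis $\deg X^{\vee}\geq c'_{s}(d_{1},\dots,d_{s})-\bigl(2c_{s}(d_{1},\dots,d_{s})+2t-8\bigr)$ is equivalent to $\sum_{j=1}^{t}\varepsilon_{p_{j}}\leq 2c_{s}(d_{1},\dots,d_{s})+2t-8$. Fixing an index $j_{0}$ and using $\varepsilon_{p_{j}}\geq 2$ for the remaining $t-1$ points yields
\[
\varepsilon_{p_{j_{0}}}\leq 2c_{s}(d_{1},\dots,d_{s})+2t-8-2(t-1)=2c_{s}(d_{1},\dots,d_{s})-6 .
\]
Combining with $\varepsilon_{p_{j_{0}}}\geq\tau_{p_{j_{0}}}(X)+\tau^{(1)}_{p_{j_{0}}}(X)$ gives $\min\{\tau_{p_{j_{0}}}(X),\tau^{(1)}_{p_{j_{0}}}(X)\}\leq c_{s}(d_{1},\dots,d_{s})-3$, hence a fortiori
\[
\min\bigl\{\tau_{p_{j_{0}}}(X),\tau^{(1)}_{p_{j_{0}}}(X),\dots,\tau^{(2s)}_{p_{j_{0}}}(X)\bigr\}\leq c_{s}(d_{1},\dots,d_{s})-3 .
\]
As $j_{0}$ was arbitrary, the hypothesis of Corollary \ref{cor3} holds at every point of $\Sing X$, so that corollary shows $X$ is $\Q$-Fano and birationally superrigid (and in particular non-rational with $\Bir(X)=\Aut(X)$). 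This is also exactly the route of the proof of \cite[Corollary 1.6]{dF3}: one may equivalently feed the displayed Tyurina bound straight into Theorem \ref{mainthm} with $\delta=0$.

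The arithmetic and the appeal to Corollary \ref{cor3} are routine. The step I expect to be the main obstacle is the first paragraph: extracting from \cite[Theorem 1]{Kle} the precise form of the local correction term $\varepsilon_{p}$ for a complete intersection (rather than a hypersurface, where this is Teissier's classical formula) with only isolated singularities, verifying the genericity hypotheses under which that formula is valid, and justifying the inequality $\varepsilon_{p}\geq\tau_{p}(X)+\tau^{(1)}_{p}(X)$ — which rests on $\tau\leq\mu$ for isolated hypersurface singularities together with the good behaviour of such singularities under general hyperplane sections. Once those inputs are secured, nothing further is needed.
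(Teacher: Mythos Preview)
Your proposal is correct and follows essentially the same route as the paper, which does not spell out a proof but simply says that Corollary~\ref{cor4} follows ``by the same argument as in the proof of \cite[Corollary 1.6]{dF3}, using the generalized Teissier--Pl\"ucker formula in \cite[Theorem 1]{Kle}''; your write-up is precisely that argument, reducing the dual-degree bound via Kleiman's formula and the inequalities $\varepsilon_{p}\geq 2$ and $\varepsilon_{p}\geq\tau_{p}(X)+\tau_{p}^{(1)}(X)$ to the Tyurina bound of Corollary~\ref{cor3}. The only point to be careful with---as you flag yourself---is the exact shape of the local correction in \cite[Theorem 1]{Kle} for a complete intersection with isolated hypersurface singularities, but this is the same input the paper is invoking.
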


Section $1$ is devoted to review definitions and basic facts about Segre classes, Chern classes and Samuel multiplicities.
In Section $2$, we prove Proposition \ref{prop} as a key to prove Theorem \ref{mainthm},
which is a generalization of Pukhlikov's multiplicity bounds in \cite[Proposition 5]{Puk2} to complete intersections.
In Section $3$, we prove Theorem \ref{mainthm}.

\begin{not*}
A variety is assumed to be irreducible and reduced.
We say that a property $P$ holds for a general point in a variety if there exists a non-empty open subset in the variety such that $P$ holds for every point of the open set.
For a pure-dimensional scheme $X$ of finite type, denote by $[X]$ its fundamental cycle, and by $e_{p}(X)$ its Samuel multiplicity at a closed point $p$ in $X$ (see Definition \ref{dfsm}).
In Proposition \ref{prop}, we use the following notations:
\begin{itemize}
\item for pure-dimensional cycles $\alpha_{1}, \alpha_{2}$ on a scheme $X$, we write $\alpha_{1} \sim \alpha_{2}$ if $\alpha_{1}$ and $\alpha_{2}$ are rationally equivalent;
\item for pure-dimensional cycles $\beta_{1}, \beta_{2}$ intersecting properly on $X$ and an irreducible component $T$ of the intersection,
denote by $i(T, \beta_{1} \cdot \beta_{2} ; X)$ the intersection multiplicity of $T$ in $\beta_{1} \cdot \beta_{2}$ whenever the intersection product $\beta_{1} \cdot \beta_{2}$ is defined;
\item for a cycle $\gamma$, denote by $|\gamma|$ the support of $\gamma$, which is the union of the subvarieties appearing with non-zero coefficient in $\gamma$,
\item for a closed subscheme $Z$ of $X$, denote by $s(Z, X)$ the Segre class of $Z$ in $X$ (see Definition \ref{dfsc}),
\item for a vector bundle $E$ on $X$, denote by $c(E)$ the total Chern class of $E$ (see Definition \ref{dfsc}) and by $c(E)\cap \zeta$ its cup-product with a cycle $\zeta$,
\item for a projective variety $Y$ embedded in some projective space $\P^{N}$, denote by $c_{1}(\mathcal{O}_{Y}(1))$ the first Chern class of  a hyperplane section,
\item for projective varieties $U, V$ and a point $q$ in a projective space $\P^{N}$,
\[
J(U, V)  = \overline{\bigcup_{u\in U, v\in V} \langle u,v \rangle},\
\Sec(U) = J(U, U),\ 
C(q, U) = J(q, U),
\]
 and for a closed subset $W$ (resp. $W'$) with irreducible components $W_{1},\cdots, W_{s}$ (resp. $W_{1}', \cdots, W_{t}'$), as sets,
 \[
 J(W, W') = \bigcup_{i=1}^{s}\bigcup_{j=1}^{t} J(W_{i}, W_{j}'),\ \ C(q, W) = \bigcup_{i=1}^{s}C(q, W_{i}).
 \]
 \end{itemize}
For the definitions and basic properties of fundamental cycles, rational equivalence, intersection products and intersection multiplicities, we follow \cite{Ful}.
\end{not*}

\begin{ack*}
The author wishes to express his gratitude to his supervisor Professor Yujiro Kawamata for his encouragement and valuable advice.
The author is grateful to Professor Tommaso de Fernex for sending his drafts of the papers \cite{dF2, dF3} and for helpful suggestions. 
The author wishes to thank Akihiro Kanemitsu, Chen Jiang, Pu Cao and Yusuke Nakamura for careful reading of the manuscript and helpful suggestions.
This paper is an extension of the master thesis of the author at University of Tokyo.
This work was supported by the Program for Leading Graduate 
Schools, MEXT, Japan.
\end{ack*}

\section{Preliminaries}

\subsection{Segre classes and Chern classes}

We recall the notion of Segre classes and Chern classes, following \cite[Chapter 3 and 4]{Ful}.

\begin{df}\label{dfsc}
Let $X$ be a scheme and $E$ be a vector bundle of rank $e+1$ on $X$.
Set $P(E)=\Proj(\Sym E^{\vee})$.
Define the {\it total Segre class $s(E)$ of $E$} as follows:
\[
s(E)\cap \alpha = \sum_{i\geq0} p_{*} (c_{1}(\mathcal{O}(1))^{e+i} \cap p^{*}\alpha ),
\]
where $p\colon P(E)\rightarrow X$ is the projection, $\mathcal{O}(1)$ is the tautological line bundle and $\alpha$ is an arbitrary cycle.
Define the {\it total Chern class} of $E$ by
\[
c(E) = s(E)^{-1}.
\]

Let $Z$ be a closed subscheme of $X$.
Set $P(C_{Z}X\oplus 1) = \Proj\left((\oplus_{i\geq 0}\mathcal{I}_{Z}^{i}/\mathcal{I}_{Z}^{i+1})[t]\right)$ 
where $t$ is a variable.
Define the {\it Segre class $s(Z, X)$ of $Z$ in $X$} as follows:
\[
s(Z, X) = \sum_{i\geq 1} \pi_{*}( c_{1}(\mathcal{O}(1))^{i}\cap P(C_{Z}X \oplus 1)),
\]
where $\pi \colon P(C_{Z}X\oplus 1) \rightarrow Z$ is the projection
and $\mathcal{O}(1)$ is the tautological line bundle.
\end{df}

If $Z$ is regularly imbedded in $X$, then
\[
s(Z, X) =  c(N_{Z}X)^{-1}\cap [Z]
\]
by \cite[Proposition 4.1]{Ful}.

\subsection{Samuel multiplicities}

\begin{df}\label{dfsm}
The {\it Samuel multiplicity} or {\it multiplicity} of a pure-dimensional scheme $X$ of finite type at a closed point $p\in X$
is defined to be the Samuel multiplicity of the maximal ideal $\mathfrak{m}_{X, p} \subset \mathcal{O}_{X, p}$
\[e_{p}(X) =  e(\mathfrak{m}_{X, p}) = \lim_{t\rightarrow \infty} \frac{n! l(\mathcal{O}_{X, p}/\mathfrak{m}_{X, p}^{t+1})}{t^{n}},\]
where $n=\dim \mathcal{O}_{X, p}$.
Here, $e_{p}(X)$ agrees with the coefficient of $[p]$ in the Segre class $s(p, X)$ (see \cite[Example 4.3.4]{Ful}).
For an irreducible subvariety $S$ of $X$, define
\[e_{S}(X) =  \min \left\{e_{p}(X) \mid p \in S \right\}.
\]
This is well-defined by the upper-semicontinuity of multiplicities \cite{Ben}.
We extend the definition of the multiplicity linearly to an arbitrary cycle
where we use the convention $e_{p}(X) = 0$ if $p\not\in X$.
\end{df}

By \cite[Lemma 4.2]{Ful},
\[
e_{p}(X) = e_{p}([X])
\]
for every $p\in X$.
Thus we identify the scheme $X$ and the cycle $[X]$ when we deal with its multiplicity (and also its degree).

Samuel multiplicities satisfies the following property when we cut down a given pure-dimensional Cohen-Macaulay closed scheme by a hyperplane.

\begin{prop}[\cite{dFEM2}, Proposition 4.5]\label{mult}
Let $X$ be a positive, pure-dimensional Cohen-Macaulay closed subscheme in $\P^{N}$.
Then for an arbitrary hyperplane $\mathcal{H} \subset (\mathbb{P}^N)^{\vee}$, if $H\in \mathcal{H}$ is general, 
\[e_{p}(X\cap H) = e_{p}(X)\]
 for every $p\in X\cap H$.
\end{prop}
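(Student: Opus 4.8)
The plan is to reduce the statement to the classical behaviour of Samuel multiplicities under a ``superficial'' hyperplane cut, and then to upgrade the resulting pointwise genericity to a single global hyperplane by a dimension count. The commutative-algebra input is this: if $(A,\mathfrak m)$ is a Noetherian local ring with $\dim A\geq 1$ and $x\in\mathfrak m$ is a nonzerodivisor which is superficial for $\mathfrak m$ (its initial form avoids the minimal primes of $\operatorname{gr}_{\mathfrak m}A$), then comparing Hilbert--Samuel polynomials gives $e(\mathfrak m,A)=e(\mathfrak m/xA,\,A/xA)$. Hence it is enough to exhibit a single general $H\in\mathcal H$ such that for every $p\in X\cap H$ the local equation $h_p\in\mathcal O_{X,p}$ of $H$ is a nonzerodivisor which is superficial for $\mathfrak m_{X,p}$: for then $\mathcal O_{X\cap H,p}=\mathcal O_{X,p}/(h_p)$ has dimension $\dim X-1\geq 0$, so $e_p(X\cap H)=e_p(X)$, and letting $p$ range over $X\cap H$ gives the proposition.

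The nonzerodivisor part is a Bertini-type argument resting on the Cohen--Macaulay hypothesis. Writing $\mathcal H=\{H:q\in H\}$ for the point $q\in\P^N$ to which $\mathcal H$ is dual, each irreducible component $Z$ of $X$ has $\dim Z\geq 1$, so the sub-linear-system of hyperplanes through $q$ containing $Z$ has codimension $\geq 2$ in $\mathcal H$; thus a general $H\in\mathcal H$ contains no component of $X$, and $X\cap H$ is pure of dimension $\dim X-1$. Since $X$ is Cohen--Macaulay it has no embedded primes, so at every $p\in X\cap H$ the element $h_p$ lies in no associated prime of $\mathcal O_{X,p}$, hence is a nonzerodivisor, and $\mathcal O_{X\cap H,p}=\mathcal O_{X,p}/(h_p)$ is again Cohen--Macaulay; in particular $e_p(X\cap H)\geq e_p(X)$ already for all such $p$.

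It remains to secure superficiality at every point of $X\cap H$ simultaneously. For a fixed $p$, $h_p$ fails to be superficial for $\mathfrak m_{X,p}$ exactly when its initial form vanishes on an irreducible component of the projectivized tangent cone $\mathbb P(C_pX)\subset\mathbb P(\mathfrak m_{X,p}/\mathfrak m_{X,p}^2)$; since $X$ is Cohen--Macaulay this cone is equidimensional of dimension $\dim X$, so this is a proper closed condition on $H$, and it is never satisfied where $X$ is smooth and generically reduced. Consider the constructible ``bad'' set
\[
\mathcal B=\bigl\{(p,H)\in X\times\mathcal H \;:\; p\in X\cap H,\ h_p\ \text{not superficial for}\ \mathfrak m_{X,p}\bigr\}.
\]
Its fibre over a point $p$ lies in the union, over the components $V$ of $\mathbb P(C_pX)$, of the linear subsystems $\{H\supseteq\langle V\rangle\}$, each of large codimension because $\dim\langle V\rangle\geq\dim V=\dim X-1$. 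Stratifying $X$ by a finite stratification over whose strata the tangent cones form flat families (via generic flatness and Noetherian induction), one checks stratum by stratum that $\dim\mathcal B<\dim\mathcal H$: over the dense ``generic'' stratum $\mathcal B$ is empty, or, in the non-reduced case, its image in $(\P^N)^\vee$ collapses to a proper closed subset; over each lower-dimensional stratum the fibre estimate just noted keeps the dimension below $\dim\mathcal H$. Hence $\mathcal B$ does not dominate $\mathcal H$, so a general $H\in\mathcal H$ has empty $\mathcal B$-fibre, and we are done; the point $q$, if it lies on $X$, enters only as a zero-dimensional stratum.

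The main obstacle is precisely this last step. The pointwise statement --- a general hyperplane through a fixed point of $X$ preserves the multiplicity there --- is classical, but producing a single hyperplane that is simultaneously good at the (generically infinitely many) points of $X\cap H$ forces the dimension estimate on the locus of ``tangent'' pairs $(p,H)$, and it is there that the Cohen--Macaulay reduction to superficiality, together with the standard dimension bounds for conormal-type loci, does the work.
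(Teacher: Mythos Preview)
The paper does not prove this proposition at all: it is quoted from de~Fernex--Ein--Musta\c{t}\u{a} \cite{dFEM2} as Proposition~4.5 there, and is used as a black box in Section~3. So there is no argument in the present paper to compare yours against.

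Your overall strategy---reduce to the commutative-algebra fact that a superficial nonzerodivisor preserves the Samuel multiplicity, then show by a dimension count that the incidence locus $\mathcal B$ of ``bad'' pairs $(p,H)$ fails to dominate $\mathcal H$---is the natural one and is essentially how \cite{dFEM2} proceeds. However, the execution has real gaps. First, the assertion that ``over the dense generic stratum $\mathcal B$ is empty'' is false even when $X$ is smooth and reduced: at any smooth point $p$ a \emph{tangent} hyperplane $H$ gives $h_p\in\mathfrak m_{X,p}^2$, hence $e_p(X\cap H)>1=e_p(X)$; what is true is that over the smooth locus $\mathcal B$ sits inside the conormal variety, which has dimension $N-1$ in $X\times(\P^N)^\vee$ and therefore meets $X\times\mathcal H$ in dimension $\le N-2$ \emph{provided} $X^\vee\not\subset\mathcal H$ (equivalently, $X$ is not a cone with vertex $q$)---a case you do not address. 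Second, superficiality requires $h_p^*$ to avoid all \emph{associated} primes of $\operatorname{gr}_{\mathfrak m_p}\mathcal O_{X,p}$, not merely the minimal ones, and the tangent cone of a Cohen--Macaulay local ring need not itself be Cohen--Macaulay, so your identification of the bad fibre with hyperplanes containing a component of $\mathbb P(C_pX)$, and your bound $\dim V=\dim X-1$ for every such component, are both unjustified. Without that bound the fibre estimate over a lower stratum $S$ degrades to $\dim S + (N-3)$, which does not beat $N-1$ once $\dim S\ge 2$. The outline is sound, but the inequality $\dim\mathcal B<\dim\mathcal H$ is, as written, an assertion rather than a proof.
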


\section{A generalization of Pukhlikov's multiplicity bounds}

In this section, we prove a following key proposition.

\begin{prop}\label{prop}
Let $X$ be a complete intersection in $\P^{N}$ defined by $s$ hypersurfaces
and $\alpha$ be an effective cycle on $X$ of pure codimension $k$
such that $\alpha \sim m\cdot c_{1}(\mathcal{O}_{X}(1))^{k} \cap [X]$.
Assume either that $X$ is smooth or $ks + \dim \Sing(X) + 1<N$.
Then $e_{S}(\alpha)\leq m$ for every closed subvariety $S\subset X$ of dimension $\geq ks$ not meeting the singular locus of $X$.
\end{prop}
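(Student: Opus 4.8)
The plan is to reduce to a statement about hyperplane sections of $X$ together with $\alpha$, and then run an induction on $\dim S$ using a projection/secant-variety argument in the spirit of Pukhlikov.

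First I would set up the base case. When $\dim S = ks$, pick a general point $p \in S$ and consider a general linear subspace $\Lambda \subset \P^N$ of codimension $ks$ through $p$; cutting $X$ by $\Lambda$ gives a curve-free situation where one wants to compare the multiplicity $e_p(\alpha)$ against the degree-type bound $m$. Concretely, one intersects $\alpha$ with a general linear section to land in dimension $0$, and uses that the Segre-class coefficient $e_p(\alpha)$ equals the coefficient of $[p]$ in $s(p,\alpha)$ (Definition~\ref{dfsm}), bounding it by the intersection number $\alpha \cdot c_1(\mathcal{O}_X(1))^{\dim \alpha - \dim\{\text{stuff}\}}$, which by the rational equivalence $\alpha \sim m\, c_1(\mathcal{O}_X(1))^k \cap [X]$ evaluates to $m \cdot \deg X / (\text{appropriate factor})$. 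Here the hypothesis that $S$ avoids $\Sing(X)$ is essential, so that near $p$ the scheme $X$ is smooth and linear sections behave as expected; Bertini-type genericity keeps the relevant intersections proper.

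For the inductive step, suppose the bound holds for all subvarieties of dimension $< \dim S$ inside complete intersections of one more hypersurface (or one lower-dimensional linear section). Given $S$ of dimension $> ks$, the idea is to choose a general hyperplane section: by Proposition~\ref{mult}, cutting by a general $H$ preserves Samuel multiplicities at every point of the intersection, so $e_{S'}(\alpha \cap H) = e_{S'}(\alpha)$ for a suitable subvariety $S' \subseteq S \cap H$ of dimension $\dim S - 1 \geq ks$; meanwhile $\alpha \cap H$ is again an effective cycle rationally equivalent to $m\, c_1(\mathcal{O}(1))^k \cap [X\cap H]$ on the complete intersection $X \cap H$ (still of the required form, defined by $s$ hypersurfaces after absorbing $H$, or by $s+1$ with one linear — one must track the dimension and singular-locus inequality $ks + \dim\Sing + 1 < N$, which is stable under general hyperplane section since both $N$ and the ambient dimension drop by one while $\dim\Sing$ can only drop). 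One also needs $S' \cap \Sing(X\cap H) = \emptyset$, which follows from $S \cap \Sing(X) = \emptyset$ and generality of $H$. Then apply the inductive hypothesis.

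The main obstacle I anticipate is the bookkeeping that keeps the cycle $\alpha$, after successive cuts, in exactly the required rationally-equivalent form while simultaneously controlling that the numerical hypothesis ``$X$ smooth or $ks + \dim\Sing(X) + 1 < N$'' is inherited at each stage — in particular, ensuring that the components of $\alpha \cap H$ stay of pure codimension $k$ in $X\cap H$ and that no unwanted components concentrate along $S'$ (one may need a moving-lemma or generic-flatness argument, plus the fact that $S$ meets no singular point so that intersection products with $H$ are genuinely defined near $S$). A secondary subtlety is handling the edge case $\dim X = ks$ (so $S = X$ up to components) and the degenerate convention $\codim$ versus the Segre-class normalization; here one falls back on the direct computation that $e_X(\alpha) \le m$ because $\alpha$ is effective and numerically $m$ times the fundamental class. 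Once the dimension induction is in place, the conclusion $e_S(\alpha) \le m$ for all admissible $S$ follows, completing the proof.
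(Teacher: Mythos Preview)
Your induction on $\dim S$ via general hyperplane sections is fine as far as it goes, but it is also the trivial part: once you know the bound for subvarieties of dimension exactly $ks$, cutting by general $H$ and invoking Proposition~\ref{mult} immediately gives it for larger $S$. The entire content of the proposition is the base case $\dim S = ks$, and here your sketch does not supply a working argument.

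Concretely, the naive linear-section idea you describe can only yield $e_{p}(\alpha)\leq \deg\alpha = m\cdot\deg X = m\prod_{i}d_{i}$, not $e_{p}(\alpha)\leq m$. A general linear subspace of complementary dimension through $p$ meets $S$ only at $p$, so the single local contribution is bounded by the \emph{total} intersection number, which carries the unwanted factor $\deg X$. There is no ``appropriate factor'' that cancels it. What is needed is a cycle $\R_{k}$ of dimension $k$ on $X$ with two properties simultaneously: (i) $\alpha\cdot\R_{k}=m\deg\R_{k}$, and (ii) $\R_{k}$ meets $S$ in at least $\deg\R_{k}$ distinct points. Then
\[
m\deg\R_{k}=\alpha\cdot\R_{k}\ \geq\ \sum_{t\in S\cap R_{k}} e_{t}(\alpha)e_{t}(\R_{k})\ \geq\ e_{S}(\alpha)\deg\R_{k}.
\]
Producing such a $\R_{k}$ is exactly the substance of the paper's proof: one passes to cones in $\P^{N+1}$, forms iterated \emph{residual intersection classes} $\R_{j}=(\pi_{p})_{*}\R(q_{j},\R_{j-1})$ starting from $\R_{0}=[S]$, and identifies their trace on $S$ with intersections of \emph{polar loci} $P(q_{j})$. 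The count in (ii) then comes from an incidence-variety calculation showing $S\cap\bigcap_{j}P(q_{j})$ has $(\prod_{i}(d_{i}-1)^{k})\deg S$ points. None of this machinery appears in your proposal, and the phrase ``projection/secant-variety argument in the spirit of Pukhlikov'' does not substitute for it. This is the missing idea.
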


\begin{rem}
Proposition \ref{prop} is proved when $s=1$ in \cite[Proposition 5]{Puk2} and when $k=1$ in \cite[Lemma 13]{Ch3}.
\end{rem}

\begin{proof}[Proof of Proposition \ref{prop}]
\renewcommand{\theequation}{\fnsymbol{equation}}
\setcounter{equation}{1}
Let $X_{1}, \cdots, X_{s}$ be hypersurfaces in $\P^{N}$ defining $X$ with $\deg X_{i} = d_{i}$.
We take cones in $\P^{N+1}$ so that residual schemes can be defined in Step $1$ below, which is essential when we construct residual intersection cycles.
Fix a closed point $p\in \P^{N+1}\setminus \P^{N}$.
Set
\[
X_{i}' = C(p, X_{i}) \subset \P^{N+1}
\]
for each $i=1,\cdots, s$, and 
\[
X' = C(p, X) \subset \P^{N+1}.
\]
Then $X_{1}', \cdots, X_{s}'$ are hypersurfaces in $\P^{N+1}$ with $\deg X_{i}' = d_{i}$
and $X'$ is a complete intersection defined by $X_{1}',\cdots, X_{s}'$.\\

{\it Step 1:}
We use the method of multiple residual intersection as in the proof of \cite[Proposition 3]{Puk2} or \cite[Theorem]{R},
and this step is devoted to its preparation.
First we explain the construction of residual intersection cycles, following \cite[Section 9.2]{Ful}.
Let $T$ be a closed subvariety of $X$.
Take a closed point $q\in \P^{N+1}\setminus \bigcup_{i=1}^{s}X_{i}'$ and let $C=C(q,T)$.
Then $C$ is a $(\dim T+1)$-dimensional variety and $T$ is a hyperplane section of $C$.
Hence $T$ is a Cartier divisor on $C$ and
so we can define the residual scheme $R(q, T)$ to $T$ in $C\cap X'$.
Let $R^{1}(q, T), \cdots, R^{s}(q, T)$ be the residual schemes to $T$ in $C\cap X_{1}', \cdots, C\cap X_{s}'$, then $R(q, T)= \bigcap_{i=1}^{s}R^{i}(q,T)$ as a closed subscheme of $C$.
Consider a diagram
\[\xymatrix{
&R(q,T)\ar[d]^{b} & \\
T\ar[r]^{a}&C\cap X' \ar[d]^{g} \ar[r]^{j} & C \ar[d]^{f}\\
&X' \ar[r]^{i} & \P^{N+1} \\
}
\]
where
\begin{itemize}
\item $i, j, a, b, f, g$ are natural closed imbeddings,
\item the square is a fiber square.
\end{itemize}
Note that $i$ is a regular imbedding of codimesion $s$ and $ja$ imbeds $T$ as a Cartier divisor on $C$.
Formally define the residual intersection class
\[
\R(q, T) = \left\{ c(N\otimes \mathcal{O}(-T)) \cap s(R(q,T), C)  \right\}_{\dim T -s+1}
\]
in $A_{\dim T-s+1}(R(q, T))$, where $N= g^{*}N_{X'}\P^{N+1}$ and $\mathcal{O}(-T) = j^{*}\mathcal{O}_{C}(-T)$.
By the residual intersection theorem (see \cite[Theorem 9.2]{Ful}),
\[
C\cdot X' = \left\{c(N)\cap s(T, C)  \right\}_{\dim T -s+1} + \R(q,T)
\]
in $A_{\dim T-s+1}(C\cap X')$.
Thus
\[
\R(q,T)\sim \left(\prod_{i=1}^{s}(d_{i}-1)\right) \cdot c_{1}(\mathcal{O}_{C}(1))^{s}\cap [C]
\]
in $A_{\dim T-s+1}(C)$.
(Note that we can also define $\R(q, T)$ naively as follows:
\[
\R(q, T) = R^{1}(q, T)\cdot  \ldots \cdot R^{s}(q,T).
\]
Both definitions coincide by \cite[Proposition 6.1 (a) and Example 6.5.1 (b)]{Ful}.
Then the above rational equivalence is established again immediately since 
\[
R^{i}(q, T) \sim (d_{i}-1)\cdot c_{1}(\mathcal{O}_{C}(1))\cap [C]
\]
for each $i=1,\cdots, s$.)
We will see $\R(q, T)$ as the cycle class of $X'$.
We extend the definition of the residual intersection to arbitrary pure-dimensional cycles on $X$ linearly.

Next we prove fundamental properties of residual intersections and polar loci of linear projections.
They are related to each other
and we can use polar loci to estimate the dimension of intersection of residual intersections with closed subvarieties.
Fix a homogeneous coordinate $[X_{0}:\cdots:X_{N+1}] \in \P^{N+1}$.
For a closed point $q=[q_{0}:\cdots:q_{N+1}] \in \P^{N+1}\setminus \left( \bigcup_{i=1}^{s} X_{i}' \right)$ and each $i=1, \cdots, s$,
define the polar locus of $X_{i}'$ by
\[P^{i}(q)= \left\{  \sum_{k=0}^{N+1}q_{k}\frac{\partial f_{i}}{\partial X_{k}} = 0 \right\} \subset X_{i}',
\]
where $f_{i}$ is the defining equation of $X_{i}'$,
and define the polar locus of $X'$ by
\[P(q) = \left\{  \sum_{k=0}^{N+1}q_{k}\frac{\partial f_{1}}{\partial X_{k}}=\cdots = \sum_{k=0}^{N+1}q_{k}\frac{\partial f_{s}}{\partial X_{k}} = 0  \right\} \subset X'.
\]
By definition,
\[
P(q) \cap (X')^{sm} = \left\{x \mid q \in H_{x} \right\} \subset (X')^{sm}
\]
as a set, where $H_{x} \subset \P^{N+1}$ is the embedded tangent space of $X'$ at $x$.

\begin{lem}\label{lem}
Let $T\subset X$ be a closed subvariety with $T\cap \Sing(X)=\emptyset$,
and $U, V\subset X'$ be closed subvarieties.
Then the following hold for a general point $q\in \P^{N+1}\setminus (\bigcup_{i=1}^{s}X_{i}')$
and a general $k$-tuple $(q_{1},\cdots,q_{k})\in \left(\P^{N+1}\setminus (\bigcup_{i=1}^{s}X_{i}')\right)^{k}\colon$
\begin{enumerate}
\item[$(1)$]$T\cap R(q, T) = T \cap P(q)$ as a set.
\item[$(2)$]
$\left\{
\begin{array}{ll}
\dim U\cap  T\cap R(q, T)= \dim U \cap T -s & \text{ if $\dim U\cap T \geq s$,}\\
U\cap T \cap R(q, T)= \emptyset & \text{ otherwise.}
\end{array}
\right.$
\item[$(3)$]$\dim U \cap R(q, V) \setminus V \leq \dim U + \dim V -N$, where we use the convention $\dim(\emptyset) = -\infty$.
\item[$(4)$]If $\dim T=ks$, then the number of the points of $T\cap \bigcap_{j=1}^{k} P(q_{j})$ is the same as 
$\left(\prod_{i=1}^{s}(d_{i}-1)^{k}\right) \deg T$.
\end{enumerate}
\end{lem}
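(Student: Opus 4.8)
The plan is to reduce all four parts to statements about the polar loci $P^{i}(q)$ and $P(q)$, whose defining equations $\sum_{k}q_{k}\,\partial f_{i}/\partial X_{k}$ are explicit. Part $(1)$ is the bridge: once it is available, $(2)$ becomes a dimension count for the intersection of a subvariety of $X$ with the $P(q)$, $(4)$ becomes a B\'ezout computation for such an intersection, and $(3)$ follows from the observation that a residual scheme agrees with the scheme it is cut from away from the divisor being removed. For $(1)$ I would argue locally near a point $t\in T$: since $T\cap\Sing(X)=\emptyset$, every point of $T$ is smooth on $X$ and hence, by the Jacobian criterion (which makes $\nabla f_{1},\dots,\nabla f_{s}$ linearly independent, in particular nonzero, at such a point), smooth on each $X_{i}'$ and on $X'$. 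Parametrizing a ruling of $C=C(q,T)$ through $t$ as $t+u\cdot q$ with affine parameter $u$ vanishing on $T$, and Taylor expanding in $u$ using $f_{i}|_{T}=0$, one gets $f_{i}|_{C}=u\cdot\bigl(\sum_{k}q_{k}\,\partial f_{i}/\partial X_{k}\bigr)\big|_{T}+O(u^{2})$; dividing by the local equation $u$ of $T$ to form $R^{i}(q,T)$ and restricting to $T=\{u=0\}$ recovers exactly the equation of $P^{i}(q)\cap T$ (and if $T$ occurs in $C\cap X_{i}'$ with multiplicity $>1$, the displayed leading coefficient vanishes on $T$ and both sides equal $T$). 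Intersecting over $i=1,\dots,s$ gives $T\cap R(q,T)=T\cap P(q)$ as sets; the same computation shows that away from $V$ the residual scheme $R^{i}(q,V)$ coincides with $C(q,V)\cap X_{i}'$, which is what $(3)$ needs.

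For $(2)$: by $(1)$, $U\cap T\cap R(q,T)=(U\cap T)\cap P(q)$, and each irreducible component $Z$ of $U\cap T$ lies in the smooth locus of $X$, hence of $X'$, so the embedded tangent space $H_{x}$ of $X'$ is a fixed $\P^{N+1-s}$ for all $x\in Z$ and $Z\cap P(q)=\{x\in Z:q\in H_{x}\}$. The incidence variety $I_{Z}=\{(x,q):x\in Z,\ q\in H_{x}\}$ is a $\P^{N+1-s}$-bundle over $Z$, so $\dim I_{Z}=\dim Z+N+1-s$; projecting to $\P^{N+1}$, the theorem on the dimension of fibers gives $\dim(Z\cap P(q))\leq\dim Z-s$ for general $q$, and $Z\cap P(q)=\emptyset$ when $\dim Z<s$. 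For the reverse inequality when $\dim Z\geq s$, write $Z\cap P(q)=Z\cap G_{1}(q)\cap\dots\cap G_{s}(q)$, where $G_{i}(q)=\{\sum_{k}q_{k}\,\partial f_{i}/\partial X_{k}=0\}$ is (for general $q$) a hypersurface of degree $d_{i}-1$, and apply the projective dimension theorem $s$ times in $\P^{N+1}$: every component has dimension $\geq\dim Z-s$. Taking the maximum over the finitely many components of $U\cap T$, which one general $q$ handles simultaneously, yields $(2)$. For $(3)$: the local computation in $(1)$ gives $R(q,V)\setminus V=\bigcap_{i}\bigl(R^{i}(q,V)\setminus V\bigr)=\bigl(C(q,V)\cap X'\bigr)\setminus V$, so $U\cap R(q,V)\setminus V=\bigl(U\cap C(q,V)\bigr)\setminus V$ since $U\subseteq X'$; and any $x\in C(q,V)$ with $x\notin V$ satisfies $x\neq q$ (because $q\notin X'\supseteq U$) and lies on a line $\langle q,v\rangle=\langle x,v\rangle$ with $v\in V$, whence $q\in C(x,V)$. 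Thus $\bigl(U\cap C(q,V)\bigr)\setminus V$ is contained in the fiber over $q$ of the incidence variety $\Gamma=\{(x,q):x\in U,\ q\in C(x,V)\}$, which has dimension at most $\dim U+\dim V+1$, so a general $q$ gives $\dim\bigl(U\cap R(q,V)\setminus V\bigr)\leq\dim U+\dim V-N$.

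For $(4)$: by $(1)$, $T\cap\bigcap_{j=1}^{k}P(q_{j})=T\cap\bigcap_{i,j}G_{i}(q_{j})$. Choosing $q_{1},\dots,q_{k}$ in turn, each general given its predecessors, and iterating the argument of $(2)$---which applies because every partial intersection is still a subscheme of $T$, hence of the smooth locus of $X$---the dimension drops by exactly $s$ at each of the $k$ stages, so $T\cap\bigcap_{j}P(q_{j})$ is finite. Since the base locus of the linear system spanned by $\partial f_{i}/\partial X_{0},\dots,\partial f_{i}/\partial X_{N+1}$ equals $\Sing(X_{i}')$, which is disjoint from $T$ and from every partial intersection, these systems are base-point-free there, and Bertini's theorem in characteristic zero shows that for general $q_{1},\dots,q_{k}$ the scheme $T\cap\bigcap_{i,j}G_{i}(q_{j})$ is reduced. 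As it is a proper intersection of $T$ (of degree $\deg T$ and dimension $ks$) with $ks$ hypersurfaces, $k$ of them of degree $d_{i}-1$ for each $i$, B\'ezout computes the degree of the associated zero-cycle, and hence the number of points, as $\bigl(\prod_{i=1}^{s}(d_{i}-1)^{k}\bigr)\deg T$. The step I expect to be the main obstacle is part $(1)$: it is the only place where one must open up the scheme-theoretic residual construction and identify it with the explicit polar equations, and the argument must be set up so that it does not covertly assume $T$ appears in $C\cap X_{i}'$ with multiplicity one; once it is in hand, $(2)$--$(4)$ are routine, the only point needing care being that the $s$ hypersurfaces $G_{1}(q_{j}),\dots,G_{s}(q_{j})$ cutting out $P(q_{j})$ share the single parameter $q_{j}$, which is why in $(4)$ the $q_{j}$ are chosen successively generic rather than all at once.
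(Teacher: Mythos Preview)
Your argument is correct and follows the paper's route closely: part $(1)$ by the local expansion along rulings of $C(q,T)$ (the paper cites Pukhlikov's argument and notes $\Sec(T)\subset\P^N$, which is what makes your parametrization $(t',u)\mapsto t'+uq$ a local isomorphism for general $q$); parts $(2)$ and $(3)$ by the same incidence-variety fiber-dimension counts (the paper uses $J_{U,V}\subset U\times V\times\P^{N+1}$ in place of your $\Gamma$, splitting into the cases $J(U,V)\subsetneq\P^{N+1}$ and $J(U,V)=\P^{N+1}$, but this is equivalent).

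The one place to tighten is reducedness in $(4)$. Because $G_1(q_j),\dots,G_s(q_j)$ all depend on the single parameter $q_j$, the standard Bertini theorem for one base-point-free linear system does not directly yield that $T\cap\bigcap_{i,j}G_i(q_j)$ is reduced, and choosing the $q_j$ successively does not by itself resolve this within a single step. The paper avoids the issue by working with the full incidence
\[
I_T=\{(x,q_1,\dots,q_k)\in T\times(\P^{N+1})^k : q_j\in H_x\ \text{for all }j\},
\]
which is irreducible of dimension $k(N+1)$; the projection $I_T\to(\P^{N+1})^k$ is then surjective and generically finite, and generic smoothness in characteristic zero gives that the general fiber $T\cap\bigcap_j P(q_j)$ is reduced, after which the point count equals the intersection number $T\cdot\prod_{i,j}(P^i(q_j)|_{X'})$. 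Your iterative scheme can be repaired identically by applying the same generic-reducedness argument to $\{(x,q):x\in T_{j-1},\,q\in H_x\}\to\P^{N+1}$ at each stage, but the one-shot incidence is cleaner.
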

\begin{proof}
$(1)$
It is enough to show for each $i=1,\cdots, s$,
\[T \cap R^{i}(q,T) = T \cap P^{i}(q)
\]
for a general $q\in \P^{N+1}\setminus (\bigcup_{i=1}^{s}X_{i}')$.
This follows from Pukhlikov's argument in the proof of \cite[Lemma 3]{Puk1}
since the secant variety $\Sec(T)$ of $T$ is contained in $\P^{N}$.

For dimension estimates in the following proofs, we freely use the generic flatness (for example, see \cite[Theorem 24.1]{M}).

$(2)$
Let $W\subset U\cap T$ be an irreducible component.
By the assumption, $T$ is contained in the smooth locus of $X'$, and so is $W$.
The incidence set
\[I_{W} = \left\{(x,q) \mid q\in H_{x} \right\} \subset W\times \P^{N+1}\]
is irreducible and $\dim I_{W} = \dim W + N-s +1$.
For each $q\in \P^{N+1}$, the fiber of the projection over $q$ is $W\cap P(q)$ by the definition of $I_{W}$,
and $W\cap P(q)$ is non-empty if $\dim W \geq s$ since $P(q)$ is defined by $s$ hypersurfaces.
Thus the second projection is surjective if and only if $\dim W \geq s$,
so
\[
\left\{
\begin{array}{ll}
\dim W\cap P(q) = \dim W - s & \text{ if $\dim W \geq s$,}\\
W\cap P(q) = \emptyset & \text{ otherwise}
\end{array}
\right.
\]
for general $q\in \P^{N+1}\setminus \left( \bigcup_{i=1}^{s} X_{i}' \right)$.
Therefore
\[
\left\{
\begin{array}{ll}
\dim U\cap T \cap P(q) = \dim U\cap T -s & \text{ if $\dim T\cap U \geq s$, } \\
U\cap T \cap P(q)= \emptyset  & \text{ otherwise}
\end{array}
\right.
\]
for general $q\in \P^{N+1}\setminus \left( \bigcup_{i=1}^{s} X_{i}' \right)$.
The assertion follows by $(1)$.

$(3)$
First assume that $J(U,V) \varsubsetneq \P^{N+1}$.
Then $U\cap R(q,V) \subset V$ for $q \in \P^{N+1}\setminus J(U, V)$.

Next assume that $J(U,V)=\P^{N+1}$.
Set
\[ J_{U, V} = \overline{\bigl\{ \ (u,v,q)\in (U \times V \setminus \Delta)\times\P^{N+1} \ | \ q\in \langle u,v \rangle  \bigr\}} \subset U\times V \times \P^{N+1}.\]
Then $J_{U,V}$ is irreducible and $J_{U, V} = \dim U + \dim V +1$.
By the assumption, the projection $\pi_{3}\colon J_{U,V} \rightarrow \P^{N+1}$ is surjective.
Thus
\[
\dim \pi_3^{-1}(q) = \dim U + \dim V  -N
\]
for general $q\in \P^{N+1}$.
For every $u\in U\cap R(q, V) \setminus V$,
there exists $v\in V$ such that $v\neq u$ and $u, v, q$ are collinear.
This implies $(u,v,q) \in \pi_{3}^{-1}(q)$ and the assertion follows.

$(4)$
The incidence set
\[I_{T}= \left\{(x,q_{1},\cdots,q_{k})  \mid  q_{1}, \cdots, q_{k} \in H_{x} \right\} \subset T \times (\P^{N+1})^{k}.
\]
is irreducible and $\dim I_{T} = (N+1)k$.
For each $(q_{1},\cdots, q_{k}) \in (\P^{N+1})^{k}$, the fiber of the projection over $(q_{1}, \cdots, q_{k})$ is $T\cap  \bigcap_{j=1}^{k} P(q_{j})$ by the definition of $I_{T}$,
and $T \cap \bigcap_{j=1}^{k} P(q_{j})$ is non-empty by the assumption since $\bigcap_{j=1}^{k} P(q_{j})$ is defined by $ks$ hypersurfaces in $X'$.
Thus the projection to the last $k$ components is surjective, and generically-finite.
By the generic smoothness, 
the number of the points of $T\cap \bigcap_{j=1}^{k} P(q_{j})  = T \cap \bigcap_{i=1}^{s}\bigcap_{j=1}^{k}P^{i}(q_{j})$ is the same as the intersection number
\[T\cdot \prod_{i=1}^{s}\prod_{j=1}^{k}(P^{i}(q_{j})|_{X'})= \left(\prod_{i=1}^{s}(d_{i}-1)^{k}\right) \deg T
\]
for general $(q_{1},\cdots, q_{k}) \in \left(\P^{N+1}\setminus \left( \bigcup_{i=1}^{s} X_{i}' \right) \right)^{k}$.
\end{proof}

To close this step, we prove that $R(q,T)$ has the expected dimension and $\R(q, T)$ is well-defined as a cycle
for a closed subvariety $T\subset X$ of dimension $\geq s$,
if $T\cap \Sing(X)=\emptyset$ and $q\in \P^{N+1}\setminus \left(\bigcup_{i=1}^{s}X_{i}'\right)$ is general.
By Lemma \ref{lem} $(2)$, $\dim T\cap R(q, T) = \dim T -s$ for general $q\in \P^{N+1}\setminus \left( \bigcup_{i=1}^{s} X_{i}' \right)$.
Since $T$ is a hyperplane section of $C(q,T)$ and $R(q, T)$ is locally defined by $s$ elements in $C(q,T)$, 
$\dim R(q, T) = \dim T -s + 1$.
Hence the assertion follows.

Thus if $\beta$ is a pure-dimensional cycle  of dimension $\geq s$ on $X$ such that $|\beta| \cap \Sing(X)= \emptyset$ and $q$ is general,
$\R(q, \beta)$ is a well-defined pure-dimensional cycle on $X'$
and
\[
\dim \R(q, \beta) = \dim \beta - (s-1),\ \deg\R(q, \beta) = \left(\prod_{i=1}^{s}(d_{i}-1)\right) \deg \beta.
\]
\\

{\it Step 2: }
Now we start the proof of Proposition.
Let $S\subset X$ be a closed subvariety of dimension $ks$ with $S\cap \Sing (X)= \emptyset$.
We may assume that $S$ is contained in the support of $\alpha$.
We construct multiple residual intersections from $S$.
For each $j=0, 1,\cdots, k$, we inductively define $\R_{j}$ and its support $R_{j}$ as follows:
Set $\R_{0} = [S]$ and $R_{0}=S$.
Assume that we have constructed a pure-dimensional cycle $\R_{j-1}$ on $X$ with support $R_{j-1}=|\R_{j-1}|$
such that
\[
\dim \R_{j-1} = ks - (j-1)(s-1), \, \deg \R_{j-1} = \left(\prod_{i=1}^{s}(d_{i}-1)^{j-1}\right) \deg S
\]
and $R_{j-1} \cap \Sing(X) = \emptyset$.
Choose a point $q_{j}\in \P^{N+1}\setminus \left(\bigcup_{i=1}^{s}X_{i}'\right)$ 
so that the following conditions are all satisfied:
\begin{enumerate}
\item[$(C_{1})$] $\R(q_{j}, \R_{j-1})$ is a well-defined cycle, 
\item[$(C_{2})$] $\pi_{p}(R(q_{j}, \R_{j-1}))\cap \Sing(X)=\emptyset$ if $X$ is singular, where $\pi_{p} \colon \P^{N+1} \dashrightarrow \P^{N}$ is the linear projection from $p$
and $R(q_{j}, \R_{j-1})$ is the support of $\R(q_{j}, \R_{j-1})$,
\item[$(C_{3})$] Lemma \ref{lem} $(1)$ holds for every irreducible component $T$ of $R_{j-1}$ and $q=q_{j}$,
\item[$(C_{4})$] Lemma \ref{lem} $(2)$ holds for every irreducible component $T$ of $R_{j-1}$, every irreducible component $U$ of $|\alpha|$ and $q=q_{j}$,
\item[$(C_{5})$] Lemma \ref{lem} $(3)$ holds for every irreducible component $U$ of $R_{j-1}$, every irreducible component $V$ of $C(p, |\alpha|)$ and $q=q_{j}$,
\item[$(C_{6})$] $q_{j} \in \P^{N+1}\setminus \P^{N}$,
\item[$(C_{7})$]$(q_{1},\cdots, q_{j}) \in \pi_{1,\cdots, j}(P^{0})$, where $P^{0}$ is an open subset of $\left(\P^{N+1}\setminus \left( \bigcup_{i=1}^{s} X_{i}' \right) \right)^{k}$ such that Lemma \ref{lem} $(4)$ holds for $T=S$ and every point in $P^{0}$,
and $\pi_{1,\cdots, j} \colon (\P^{N+1})^{k}\rightarrow (\P^{N+1})^{j}$ is the projection to the first $j$ components.
\end{enumerate}
If $X$ is singular, ($C_{2}$) is satisfied for a general $q_{j}$ since $J(R_{j-1}, \Sing(X))$ is strictly contained in $\P^{N}$ by the assumption.
Set
\[\R_{j} = (\pi_{p})_{\ast} \R(q_{j}, \R_{j-1}).
\]
Then $\R_{j}$ is a pure-dimensional cycle on $X$ with support $R_{j} = |\R_{j}|$ such that
\[\dim \R_{j} = ks - j(s-1), \, \deg \R_{j} = \left(\prod_{i=1}^{s}(d_{i}-1)^{j}\right) \deg S\]
and $R_{j}\cap \Sing(X) = \emptyset$.
In particular, $\R_{k}$ is a pure-dimensional cycle on $X$ with support $R_{k}=|\R_{k}|$ such that
\[\dim \R_{k}=k, \, \deg \R_{k} =  \left(\prod_{i=1}^{s}(d_{i}-1)^{k}\right) \deg S\]
and $R_{k}\cap \Sing(X) = \emptyset$.

\begin{lem}\label{lem2}
The following hold.
\begin{enumerate}
\item[$(1)$] $\alpha$ and $\R_{k}$ intersect properly on $X$, i.e. $\dim |\alpha| \cap R_{k} = 0$.
\item[$(2)$] $S\cap R_{k}$ contains at least $\deg \R_{k}$ distinct points.
\end{enumerate}
\end{lem}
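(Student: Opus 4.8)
The plan is to prove (1) and (2) simultaneously by tracking what happens at each inductive step of the residual-intersection construction. Observe first that $\alpha \sim m\cdot c_1(\mathcal{O}_X(1))^k\cap [X]$ has pure codimension $k$ in $X$, and each $\R_j$ has dimension $ks-j(s-1)$; in particular $\R_k$ has dimension $k$, so for (1) we must show $|\alpha|\cap R_k$ is finite (zero-dimensional) rather than positive-dimensional. For (2), since $\deg\R_k = (\prod_i (d_i-1)^k)\deg S$, we need to produce that many points lying in both $S$ and $R_k$; the natural candidates are the points of $S\cap\bigcap_{j=1}^k P(q_j)$, whose number is exactly $(\prod_i(d_i-1)^k)\deg S$ by Lemma \ref{lem}(4) (available because of condition $(C_7)$).

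For part (2), I would argue by induction that $R_0\cap R_j = S\cap R_j$ contains $S\cap \bigcap_{l=1}^{j}P(q_l)$, or more precisely that the points of the latter persist through the projection $\pi_p$. The key mechanism: by the construction $R_j = \pi_p(R(q_j,\R_{j-1}))$, and by Lemma \ref{lem}(1) applied to each irreducible component $T$ of $R_{j-1}$ (condition $(C_3)$), $T\cap R(q_j,T) = T\cap P(q_j)$ as sets. Since $S\subset \P^N$ and $p\notin\P^N$ while $q_j\in\P^{N+1}\setminus\P^N$ (condition $(C_6)$), the projection $\pi_p$ restricted to the relevant loci behaves well; chasing a point of $S\cap\bigcap_{l=1}^j P(q_l)$ through the construction shows it lies in $R_j$. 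At $j=k$ this gives $S\cap R_k \supseteq S\cap\bigcap_{l=1}^k P(q_l)$, and Lemma \ref{lem}(4) with $(C_7)$ guarantees this set has at least $\deg\R_k$ distinct points, completing (2). Since $S\subset |\alpha|$, this also shows $|\alpha|\cap R_k\neq\emptyset$.

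For part (1), I would show $\dim|\alpha|\cap R_k\leq 0$ by descending induction on the steps, bounding $\dim(|\alpha|\cap R_j)$ at each stage. Write $R_j = \pi_p(R(q_j,\R_{j-1}))$; a point of $|\alpha|\cap R_j$ either comes from a point of $C(p,|\alpha|)\cap R(q_j,\R_{j-1})$ lying over $|\alpha|$, and such a point either lies in $\R_{j-1}$ itself or in the "new" part of the residual intersection. For the new part, condition $(C_5)$ (Lemma \ref{lem}(3)) bounds $\dim\, U\cap R(q_j,V)\setminus V \leq \dim U + \dim V - N$ for components $U$ of $R_{j-1}$ and $V$ of $C(p,|\alpha|)$; since $\dim R_{j-1} = ks-(j-1)(s-1)$ and $\dim C(p,|\alpha|) = \dim|\alpha|+1 = N-s-k+1$, this contribution has dimension at most $ks-(j-1)(s-1) + (N-s-k+1) - N$, which one checks is $< ks - j(s-1)$ when $k\geq 1$; so generically this part does not meet $|\alpha|$ in the expected codimension, forcing a drop. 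For the part lying in $\R_{j-1}$, condition $(C_4)$ (Lemma \ref{lem}(2) for components $T$ of $R_{j-1}$ and $U$ of $|\alpha|$) forces $\dim(U\cap T\cap R(q_j,T))$ to drop by $s$ relative to $\dim U\cap T$, or become empty. Iterating from $j=1$ to $k$, starting from $\dim(|\alpha|\cap S) \leq \dim S = ks$ (indeed $S\subseteq|\alpha|$ so this is $ks$), the dimension drops by $s$ at each of the $k$ steps through the $(C_4)$ mechanism, reaching $ks - ks = 0$ at $j=k$.

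The main obstacle I anticipate is bookkeeping the interaction between the residual intersection and the projection $\pi_p$: one must verify carefully that the dimension estimates of Lemma \ref{lem}, which are stated for subvarieties of $X'\subset\P^{N+1}$, descend correctly under $\pi_p$ to give estimates on $X$, and that the conditions $(C_1)$–$(C_7)$ are jointly satisfiable for a general choice of $q_j$ at each step (each is an open dense condition, but one must confirm they apply to the specific finite list of components produced so far). A secondary subtlety is ensuring that in part (2) the points of $S\cap\bigcap P(q_l)$ remain distinct after projection and genuinely survive into $R_k$ rather than being absorbed into lower-dimensional residual components — this is where $(C_6)$ (keeping $q_j$ off $\P^N$, so that $\pi_p$ is injective on the relevant points of $S$) does the work.
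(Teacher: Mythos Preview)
Your proposal is correct and follows essentially the same route as the paper: for part~(1) you run the same induction on $j$, splitting $C(p,|\alpha|)\cap R(q_j,\R_{j-1})$ into the piece inside $R_{j-1}$ (handled by $(C_4)$ and Lemma~\ref{lem}(2), which drops the dimension by $s$) and the piece outside (bounded by $(C_5)$ and Lemma~\ref{lem}(3)), and for part~(2) you obtain the same chain $S\cap R_k \supseteq S\cap\bigcap_{j=1}^k P(q_j)$ via repeated application of Lemma~\ref{lem}(1) together with Lemma~\ref{lem}(4). Two small bookkeeping points to tighten in a full write-up: when you invoke Lemma~\ref{lem}(3) you need $U$ to be a component of $C(p,|\alpha|)$ and $V$ a component of $R_{j-1}$ (so that $R(q_j,V)$ is the residual scheme actually appearing in the construction), and the relevant inequality for the ``new'' part is $(k-j)(s-1)\le (k-j)s$, which is what the inductive invariant requires, rather than the comparison with $\dim R_j = ks - j(s-1)$.
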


\begin{proof}

$(1)$
We claim that
\[
\dim |\alpha| \cap R_{j} = (k-j)s 
\]
for $j=0,\cdots,k$.
We prove this by induction on $j$.
The assertion is clear for $j=0$, so let $j\geq 1$.
It follows that $\dim |\alpha| \cap R_{j} = \dim C(p, |\alpha|) \cap R(q_{j}, \R_{j-1})$
since $|\alpha| \cap R_{j} = \pi_{p}(C(p, |\alpha|) \cap R(q_{j}, \R_{j-1}))$, 
thus it is enough to show that $\dim C(p,|\alpha|) \cap R(q_{j}, \R_{j-1}) = (k-j)s$.

First we note that  $R_{j-1} \cap R(q_{j}, \R_{j-1}) = \bigcup T \cap R(q_{j}, T)$ as a set, where $T$ runs all the irreducible components of $R_{j-1}$.
This holds since $T_{1} \cap R(q_{j}, T_{2}) \subset T_{2}$ for any distinct irreducible components $T_{1}, T_{2}$ of $R_{j-1}$. 

By Lemma \ref{lem} $(2)$ and the induction hypothesis,
\[
\dim |\alpha| \cap R_{j-1} \cap R(q_{j}, \R_{j-1}) = \dim |\alpha| \cap R_{j-1} - s = (k-j)s.
\]
Thus $\dim C(p, |\alpha|) \cap R_{j-1}\cap R(q_{j}, \R_{j-1}) = (k-j)s$.
On the other hand, for each irreducible component $T$ of $R_{j-1}$,
\begin{align*}
\dim C(p, |\alpha|) \cap R(q_{j}, T) \setminus T \leq (k-j)s
\end{align*}
by Lemma \ref{lem} $(3)$.
The clam follows.

Therefore
\[
\dim |\alpha| \cap R_{k} = 0,
\]
as desired.

$(2)$
If we apply Lemma \ref{lem} $(1)$ repeatedly,
\begin{align*}
S\cap R_{k} &\supseteq \bigcap_{j=0}^{k} R_{j}\\
&\supseteq \bigcap_{j=0}^{k-1}R_{j} \cap R(q_{k}, \R_{k-1})\\
&= \bigcap_{j=0}^{k-1}R_{j} \cap P(q_{k})\\
&\supseteq \cdots\\
&\supseteq S\cap \bigcap_{j=1}^{k}P(q_{j}).
\end{align*}
The proof is done by Lemma \ref{lem} $(4)$.
\end{proof}

Since $|\alpha| \cap R_{k}$ is contained in the smooth locus of $X$, the intersection product $\alpha \cdot \R_{k}$ is well-defined.
Furthermore $\alpha \cdot \R_{k}$ is a well-defined cycle by Lemma \ref{lem2} (1).
Therefore
\begin{eqnarray*}
m \deg \R_{k} & =& \alpha \cdot \R_{k} \\
& \geq& \sum_{t\in S \cap R_{k}} i(t, \alpha \cdot \R_{k}; X)\nonumber\\
& \geq &\sum_{t\in S \cap R_{k}} e_{t}(\alpha) e_{t}(\R_{k})\nonumber\\ 
& \geq &e_{S}(\alpha) \deg \R_{k} \nonumber
\end{eqnarray*}
by \cite[Corollary 12.4]{Ful} and Lemma \ref{lem2} (2).
The proof is done.
\end{proof}

\section{Proof of Theorem \ref{mainthm}}

For definitions of terminology about singularity theory, we follow \cite{Kol} and \cite[Section 2]{dF1}.
See \cite[Section 2]{dFM} for the definition and properties of Mather log discrepancy.

\begin{proof}
Take a complete intersection $X= X_{d_{1},\cdots,d_{s}}\subset \P^{\sum_{i=1}^{s} d_{i}}$ as in theorem.
It follows that $X$ is normal, factorial (\cite[Expos\'e XI, Corollaire 3.14]{Gro}) and $\rho_{X}=1$ by the numerical condition.
Moreover $-K_{X}$ is ample and $-K_{X} \sim c_{1}(\mathcal{O}_{X}(1))\cap [X]$ by adjunction.
Since terminality of $X$ can be proved in the same way as follows, we assume that $X$ is terminal (see the proof of \cite[Theorem 1.3]{dF3}).

Assume that $X$ is not birationally superrigid.
Then, by Noether-Fano inequality (\cite[Proposition 4]{dF4}),
there exists a positive integer $\mu$ and a movable linear system $\mathcal{L}\subset |-\mu K_{X}|$ 
such that $\can(X, \Bs(\mathcal{L}))< 1/\mu$, where $\can(X, \Bs(\mathcal{L}))$ is the canonical threshold of the pair $(X, \Bs(\mathcal{L}))$.
Let $c=\can(X, \Bs(\mathcal{L}))$.
For any $D \in \mathcal{L}$,
$D \sim \mu \cdot c_{1}(\mathcal{O}_{X}(1))\cap [X]$, so we have
\[
\dim \left\{x\in D \mid e_{x}(D)>\mu \right\}\leq \delta + s
\]
by Proposition \ref{prop}, thus
\[
\dim \left\{x\in D \mid e_{x}(D)\geq 1/c \right\}\leq \delta + s.
\]
Then, if $Z=D_{1}\cdot D_{2}$ is the complete intersection subscheme of $X$ defined by general members $D_{1}, D_{2}$ of $\mathcal{L}$,
any non-terminal center of the pair $(X, c Z)$ has at most dimension $\delta + s$ by \cite[Proposition 8.8]{dF1}.
Moreover, since $Z\sim \mu^{2} \cdot c_{1}(\mathcal{O}_{X}(1))^{2}\cap [X]$,
\[
\dim \left\{x\in Z \mid e_{x}(Z) > \mu^{2} \right\}\leq \delta + 2s
\]
by Proposition \ref{prop}, thus
\[
\dim \left\{x\in Z \mid e_{x}(Z) \geq 1/c^{2} \right\}\leq \delta + 2s.
\]

Take a general point $P$ in a non-terminal center of the pair $(X, cZ)$.
We cut down by $\delta + s$ general hyperplanes through $P$.
Let $\P^{\sum_{i=1}^{s}d_{i}-\delta -s} \subset \P^{\sum_{i=1}^{s}d_{i}}$ be a general linear subspace of codimension $\delta +s$ passing through $P$,
and let $W\subset \P^{\sum_{i=1}^{s}d_{i} -\delta -s }$ be the restriction of $X$ to this subspace.
By inversion of adjunction (\cite[Theorem 1.1]{EM}), $(W, cZ|_{W})$ is terminal away from finitely many points, and not terminal at $P$.
We cut down by additional $s$ general hyperplanes through $P$.
Let $\P^{\sum_{i=1}^{s}d_{i}-\delta -2s}$ $\subset \P^{\sum_{i=1}^{s}d_{i} - \delta -s}$ be a general linear subspace of codimension $s$ passing through $P$,
and let $Y\subset \P^{\sum_{i=1}^{s}d_{i} -\delta-2s}$ be the restriction of $W$ to $\P^{\sum_{i=1}^{s}d_{i}-\delta -2s}$ and $B=Z|_{Y}$.
By Proposition \ref{mult}, inversion of adjunction, adjunction formula and the assumption, it follows that
\begin{enumerate}
\item[$(1)$] $\dim \left\{x\in B \mid e_{x}(B)\geq 1/c^{2} \right\}\leq 0$.
\item[$(2)$] the pair $(Y, cB)$ is not Kawamata log terminal (klt), but klt outside $P\in B$,
\item[$(3)$] $K_{Y}\sim (\delta + 2s -1) \cdot c_{1}(\mathcal{O}_{Y}(1))\cap [Y]$,
\item[$(4)$] $m_{Y,P}^{\nu-1}\subset \overline{\Jac_{Y}}$ in $\mathcal{O}_{Y, P}$,
\end{enumerate}
where $\overline{\Jac_{Y}}$ is the integral closure of the Jacobian ideal $\Jac_{Y}$ of $Y$.

The remaining part of the proof is only a modification of the proof of \cite[Theorem]{dF2} and \cite[Theorem 5.2]{dF3}.
We briefly explain the sketch for the convenience of the reader.

Firstly, the condition $(1)$ combined with \cite[Theorem 0.1]{dFEM1} implies that the pair $(Y, 2cB)$ is klt in dimension $1$ (see \cite[Lemma 1]{dF2} and \cite[Lemma 5.3]{dF3}).
Secondly, the condition $(2)$ combined with inversion of adjunction implies that there exists a prime divisor $E$ over $Y$ with center $P$ and 
\[
a_{E}(Y, cB+ (\delta + 2s )P)\leq 0,
\]
such that the center of $E$ on the blow-up of $Y$ at $P$ has dimension $\geq \delta + 2s$ (see \cite[Lemma 2]{dF2} and \cite[Lemma 5.4]{dF3}).
If we use these facts with the inequality $c<1/\mu$ and the conditions $(3)$ and $(4)$, we have the following upper- and lower-bound of the value $\lambda = \frac{\val_{E}(P)}{c \val_{E}(B)}$:
\[
\left(1- \frac{2}{\sqrt{\prod_{i=1}^{s}d_{i}}}\right) \frac{1}{\sum_{i=1}^{s}d_{i} - 5s -2\delta - \nu - 1}>\lambda >\frac{1}{\sum_{i=1}^{s}d_{i}+1}
\]
(see \cite[Lemma 3 and 4]{dF2}, \cite[Lemma 5.5]{dF3} and the last part of the proof of \cite[Theorem 5.2]{dF3}).
Note that Nadel's vanishing and \cite[Theorem 0.1]{dFEM1} are essential in this step. 
We use \cite[Theorem 2.5]{dFM} instead of \cite[Theorem 0.1]{dFEM1} when $Y$ is singular, which gives the similar inequality between Mather log discrepancies.
Then
\[
5s + 2\delta  + \nu + 2 > \frac{2(\sum_{i=1}^{s}d_{i} +1)}{\sqrt{\prod_{i=1}^{s}}d_{i}},
\]
which contradicts to our numerical assumption.
The proof is done.
\end{proof}

\end{document}